\numberwithin{equation}{section}
\renewcommand{\eqref}[1]{(\ref{#1})}   %for some reason eqref is not supported properly
\theoremstyle{plain}
\newtheorem{theorem}{Theorem}[section]
\newtheorem{corollary}[theorem]{Corollary}
\newtheorem{lemma}[theorem]{Lemma}
\newtheorem{proposition}[theorem]{Proposition}
\theoremstyle{definition}
\newtheorem{definition}[theorem]{Definition}
\newtheorem{example}[theorem]{Example}
\theoremstyle{remark}
\newtheorem{remark}[theorem]{Remark}
\renewcommand{\leq}{\leqslant}
\renewcommand{\geq}{\geqslant}
\newcommand{\Fp}{\mathbb{F}_p}
\newcommand{\Fpt}{\mathbb{F}^\times_p}
\newcommand{\Cc}{\mathbb{C}}
\newcommand{\Zz}{\mathbb{Z}}
\newcommand{\Ff}{\mathbb{F}}
\newcommand{\Aa}{\mathbb{A}}
\newcommand{\Qq}{\mathbb{Q}}
\DeclareMathOperator{\cond}{cond}
\newcommand{\sheaf}[1]{\mathcal{{#1}}}
\newcommand{\frtr}[3]{(\Tr{{#1}})({#2},{#3})}
\newcommand{\corr}[2]{\mathcal{C}({#1},{#2})}
\DeclareMathOperator{\Tr}{tr}
\DeclareMathOperator{\swan}{Sw}
\DeclareMathOperator{\rank}{rank}
\newcommand{\lra}{\longrightarrow}
\newcommand{\ra}{\rightarrow}
\newcommand{\eps}{\varepsilon}
\begin{document}

\title{The sliding-sum method for short exponential sums}

\date{\today}

\author[\'E. Fouvry]{\'Etienne Fouvry}
\address{Universit\'e Paris Sud, Laboratoire de Math\'ematique\\
  Campus d'Orsay\\ 91405 Orsay Cedex\\France}
\email{etienne.fouvry@math.u-psud.fr}

\author[E. Kowalski]{Emmanuel Kowalski}
\address{ETH Z\"urich -- D-MATH\\
  R\"amistrasse 101\\
  CH-8092 Z\"urich\\
  Switzerland} \email{kowalski@math.ethz.ch}

\author[Ph. Michel]{Philippe Michel}
\address{EPFL/SB/IMB/TAN, Station 8, CH-1015 Lausanne, Switzerland }
\email{philippe.michel@epfl.ch}

\thanks{Ph. M. was partially supported by the SNF (grant
  200021-137488) and the ERC (Advanced Research Grant 228304);
  \'E. F. thanks ETH Z\"urich, EPF Lausanne and the Institut
  Universitaire de France for financial support. }

\subjclass[2010]{11L07,11L05,11T23}

\keywords{Short exponential sums, trace functions, sliding sums,
  completion method, Riemann Hypothesis over finite fields}

\begin{abstract}
  We introduce a method to estimate sums of oscillating functions on
  finite abelian groups over intervals or (generalized) arithmetic
  progressions, when the size of the interval is such that the
  completing techniques of Fourier analysis are barely insufficient to
  obtain non-trivial results. In particular, we prove various
  estimates for exponential sums over intervals in finite fields and
  related sums just below the Polya-Vinogradov range, and derive
  applications to equidistribution problems.
\end{abstract}

\maketitle
%%\tableofcontents

\section{Introduction} 

\noindent\rule{\textwidth}{1pt}
\textbf{Remark.} Theorem~\ref{th-slide} has been
significantly strenghtened in the joint work~\cite{shortsums} with
CS. Raju, J. Rivat and K. Soundararajan; this paper also contains
further results on short sums of trace functions. However, the results
of Section~\ref{sec-gap} concerning sums over generalized arithmetic
progressions are not contained in~\cite{shortsums}, and the present
paper is left on arXiv for this reason.
\par
\noindent\rule{\textwidth}{1pt}
\par
\medskip
\par
A basic idea in analytic number theory, with countless applications,
is the \emph{completion technique}, which gives estimates for sums
over short intervals of integers in terms of longer sums by means of
Fourier techniques. If we denote by $\hat{\varphi}$ the discrete
Fourier transform of a complex-valued function $\varphi$ defined on
$\Zz/m\Zz$, $m\geq 1$, normalized by defining
$$
\hat{\varphi}(t)=\frac{1}{\sqrt{m}}\sum_{n\in \Zz/m\Zz}\varphi
(n)e\Bigl(\frac{nt}{m}\Bigr),\quad\text{ where }\quad e(z)=e^{2i\pi
  z},
$$
then the basic inequality for sums of values of $\varphi$ over an
interval $I$ of length $<m$ projected modulo $m$ is
\begin{align}
  \Bigr\vert \sum_{n\in I} \varphi(n) \Bigl\vert &= \Bigl|
  \sum_{t\in\Zz/m\Zz} \hat{\varphi}(t) \hat{I}(t)\Bigr|
  \nonumber  \\
  &\leq \|\hat{\varphi}\|_{\infty} m^{1/2}(\log 3m)\label{eq-fourier}
\end{align}
where $\hat{I}$ the Fourier transform of the characteristic function
of $I$, which satisfies
$$
\sum_{t\in\Zz/m\Zz}|\hat{I}(t)|\leq \sqrt{m}\ (\log 3m).
$$
\par
A very classical application arises when $m=p$ is prime and
$$
\varphi(n)=\chi(f(n))e\Bigl(\frac{g(n)}{p}\Bigr)
$$
where $\chi$ is a Dirichlet character modulo $p$ and $f, g\in \Qq(X)$
are fixed rational functions, since one can then build on Weil's
theory of exponential sums in one variable over finite fields to
estimate the $L^{\infty}$-norm of the Fourier transform of
$\varphi$. To give a precise statement, assume $f=f_1/f_2$,
$g=g_1/g_2$ with $f_i\in \Zz[X]$, $g_i\in \Zz[X]$ monic
polynomials. Then we have
\begin{equation}\label{eq-polya-vino-chars}
  \sum_{n\in I} \chi(f(n))e\Bigl(\frac{g(n)}{p}\Bigr)\ll
  (\deg(f_1)+\deg(f_2)+\deg(g_1)+\deg(g_2))\sqrt{p}(\log p), 
\end{equation}
where the implied constant is absolute, for all primes $p$ such that
at least one of the following conditions holds:
\par
-- the character $\chi$ is of order $h\geq 1$ and $f$ modulo $p$ is
not proportional to an $h$-th power in $\Fp(X)$;
\par
-- the rational function $g$ modulo $p$ is not proportional to a
polynomial of degree at most $1$.
\par
In the special case where $\chi$ is non-trivial, $f=X$ and $g=0$, this
result is the classical \emph{Polya-Vinogradov inequality} (for prime
moduli). In all cases, it is clear that such an estimate is
non-trivial as long as $I$ is of length \emph{at least} $\gg
\sqrt{p}(\log p)$.
\par
In this generality, the result is \emph{almost} best possible, since
for
$$
\varphi(n)=e\Bigl(\frac{n^2}{p}\Bigr),
$$
the sum over $1\leq n\leq p^{1/2}$ exhibits no significant
cancellation. Although the gap between $p^{1/2}$ and $p^{1/2}(\log p)$
is small, it is natural to ask whether it should exist or not. We will
show in this note that, for many natural functions $\varphi$,
including those above, one gets some cancellation as long as
$p^{1/2}=o(|I|)$. The functions we use are, as in our previous works,
the \emph{trace functions} modulo primes (see Section~\ref{alggeo} for
reminders and examples; these functions satisfy a general form
of~(\ref{eq-polya-vino-chars}), see Remark~\ref{rm-general}.) A
special case is the following:

\begin{theorem}[Sliding sum bound]\label{th-slide}
  Let $p$ be a prime number, $c\geq 1$, and let $\varphi$ be an
  isotypic Fourier trace function modulo $p$, of conductor
  $\cond(\varphi)\leq c$ \emph{(}for instance
$$
\varphi(n)=\chi(f(n))e\Bigl(\frac{g(n)}{p}\Bigr)
$$
where $f$, $g\in\Qq(X)$ satisfy one of the two conditions above with 
$c\leq \deg(f_1)+\deg(f_2)+\deg(g_1)+\deg(g_2)$\emph{)}. 
\par
Then, for any interval $I$ in $\Zz/p\Zz$ with $|I|>\sqrt{p}$, we have
$$
\sum_{n\in I}\varphi(n)\ll
c^{4}|I|\Bigl(\frac{p^{1/2}}{|I|}\Bigr)^{1/3},
$$
where the implied constant is absolute.
\end{theorem}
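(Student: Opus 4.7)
The natural approach is the \emph{sliding sum} method suggested by the title. Write $S(I) = \sum_{n\in I}\varphi(n)$, and let $H$ be a parameter with $1\leq H\leq |I|$ to be chosen at the end. The starting observation is that for any shift $h\in\{0,\dots,H\}$, the sets $I$ and $I+h$ in $\Zz/p\Zz$ differ in at most $2h$ elements, so since $\|\varphi\|_\infty\leq c$ one has the trivial comparison
\[
S(I) \;=\; \sum_{n\in I}\varphi(n+h) \;+\; O(hc).
\]
Averaging over $h\in\{0,\dots,H\}$ and swapping the order of summation yields
\[
(H+1)\,S(I) \;=\; \sum_{n\in I} T_H(n) \;+\; O(H^2 c),
\qquad T_H(n):=\sum_{h=0}^{H}\varphi(n+h).
\]
Thus the problem reduces to bounding a sum of the ``smoothed'' function $T_H$ against the characteristic function of $I$.

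The second step is Cauchy--Schwarz on the smoothed sum,
\[
\Bigl|\sum_{n\in I}T_H(n)\Bigr|^{2}\;\leq\;|I|\,\sum_{n\in\Zz/p\Zz}|T_H(n)|^{2},
\]
which reduces matters to the $L^{2}$-mass of $T_H$. Expanding the square and changing variables,
\[
\sum_{n\in\Zz/p\Zz}|T_H(n)|^{2}
\;=\;\sum_{h_1,h_2=0}^{H}\;\mathcal{C}(h_2-h_1),
\qquad \mathcal{C}(k):=\sum_{n\in\Zz/p\Zz}\varphi(n)\overline{\varphi(n+k)}.
\]
The diagonal $k=0$ contributes $(H+1)\|\varphi\|_2^{2}\ll (H+1)c^{2}p$ from $|\varphi|\leq c$. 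The crux of the method is the off-diagonal bound $|\mathcal{C}(k)|\ll c^{O(1)}\sqrt{p}$ for every $1\leq |k|\leq H$.

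This off-diagonal bound is the main substantive ingredient, and it is precisely where the ``isotypic Fourier'' hypothesis enters. The correlation $\mathcal{C}(k)$ is the cross-correlation between $\varphi$ and its translate $n\mapsto\varphi(n+k)$, which are the trace functions attached to $\sheaf{F}$ and $[+k]^{*}\sheaf{F}$ respectively. A nontrivial Fourier sheaf is geometrically not translation-invariant (this is what rules out rank-one Artin--Schreier sheaves such as $\mathcal{L}_{\psi(an)}$, which are precisely \emph{not} Fourier sheaves and for which $|\mathcal{C}(k)|=p$), so the isotypic Fourier hypothesis forces $[+k]^{*}\sheaf{F}$ not to be geometrically isomorphic to $\sheaf{F}$ up to a scalar twist for any $k\neq 0$. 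The general correlation-sum estimates for trace functions (resting on Deligne's Riemann Hypothesis over finite fields, as exploited in the authors' earlier work) then deliver $|\mathcal{C}(k)|\ll c^{O(1)}\sqrt{p}$ with an implicit constant that depends only polynomially on the conductor $c$. I expect verifying this step cleanly --- and in particular checking that the Fourier hypothesis on $\sheaf{F}$ indeed precludes every nonzero translation stabiliser --- to be the one genuinely nontrivial input.

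Assembling everything,
\[
\sum_{n}|T_H(n)|^{2} \;\ll\; H\,c^{2}\,p \;+\; H^{2}\,c^{O(1)}\sqrt{p},
\]
which after Cauchy--Schwarz, division by $H+1$, and reinstating the $O(Hc)$ sliding error gives
\[
|S(I)|\;\ll\;c\,\Bigl(\frac{|I|\,p}{H}\Bigr)^{1/2}\;+\;c^{O(1)}|I|^{1/2}p^{1/4}\;+\;H\,c.
\]
The first and third terms are balanced by the choice $H\asymp (|I|\,p)^{1/3}$, in which case each equals a constant multiple of $c\,(|I|\,p)^{1/3}$; and the middle Polya--Vinogradov-type term $c^{O(1)}|I|^{1/2}p^{1/4}$ turns out, under the standing assumption $|I|>\sqrt{p}$, to be dominated by the other two after absorbing powers of $c$. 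A direct elementary check then bounds all three contributions by $c^{4}\,|I|\,(p^{1/2}/|I|)^{1/3}$ in this range, yielding the stated inequality. Apart from the off-diagonal correlation bound, the argument is entirely soft: a sliding inequality, Cauchy--Schwarz, and an optimisation in the single parameter $H$.
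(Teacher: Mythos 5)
Your argument is correct, and it reaches the stated bound by a route that is organized differently from the paper's. You use the classical van der Corput/completion-by-smoothing device: average $\varphi$ over a short window of $H+1$ shifts, apply Cauchy--Schwarz against the characteristic function of $I$, expand the $L^2$-mass of the smoothed function into correlations $\corr{\varphi}{k}$, and optimize $H\asymp(|I|p)^{1/3}$; the assembly is sound (with $|I|>\sqrt{p}$ one indeed has $H\leq |I|$, and both $c(|I|p)^{1/3}$ and $c^{3/2}|I|^{1/2}p^{1/4}$ are at most $c^{4}|I|(p^{1/2}/|I|)^{1/3}$ in that range --- your intermediate bound is in fact slightly sharper than the stated one). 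The paper instead estimates the full shifted second moment $\Sigma=\sum_{a\in\Zz/p\Zz}|S(\varphi;a+I)|^2$ from above by the same correlation sums and from below by observing that all $\gg |S(\varphi;I)|/\|\varphi\|_{\infty}$ shifts $a$ with $|(a+I)\triangle I|$ small nearly preserve the sum, giving $\Sigma\gg|S(\varphi;I)|^3/\|\varphi\|_{\infty}$; this "sliding-sum" formulation needs only the almost-invariance property $|T_s(B)|\geq s$ of the summation set, which is what lets the paper extend the bound verbatim to subgroups and, with an induction, to generalized arithmetic progressions (Theorem~\ref{th-gap}), whereas your smoothing is tailored to intervals. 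Both proofs rest on the identical arithmetic input, namely $|\corr{\varphi}{k}|\ll c^{3}\sqrt{p}$ for $k\neq 0$, which you correctly isolate as the nontrivial ingredient; the only imprecision is your claim that the isotypic Fourier hypothesis by itself precludes a geometric isomorphism $\sheaf{F}\simeq[+k]^*\sheaf{F}$ for $k\neq0$. The paper's actual dichotomy (via \cite[Lemma 5.4(2)]{gowers-norms}) is that such an isomorphism forces $\cond(\sheaf{F})\geq p$, in which case the trivial bound $|\corr{\varphi}{k}|\leq c^2p\leq c^3\sqrt{p}$ (and indeed the theorem itself) holds anyway; with that small repair your proof is complete.
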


We will derive this from a rather simple general inequality which
offers wider possibilities for applications (e.g., we apply it in
Section~\ref{sec-mult} to sums of trace functions over geometric
progressions in a finite field.) We then apply these bounds to derive
equidistribution results which, again, bridge the gap between
$\sqrt{p}$ and $\sqrt{p}(\log p)$.

\begin{corollary}[Equidistribution]
\label{cor-rational}
Let
$\beta$ be any function defined on positive integers such that $1\leq
\beta(p)\ra +\infty$ as $p\ra +\infty$, and for all $p$ prime, let
$I_p$ be an interval in $\Fp$ of length $|I_p|\geq p^{1/2}\beta(p)$. 
\par
\emph{(1)} Let $f_1$, $f_2\in\Zz[X]$ be monic polynomials such that
$f=f_1/f_2\in\Qq(X)$ is not a polynomial of degree $\leq 1$. Then for
$p$ prime, the set of fractional parts
$$
\Bigl\{\frac{f(n)}{p}\Bigr\},\quad\quad n\in I_p,
$$
becomes equidistributed in $[0,1]$ with respect to Lebesgue measure as
$p\ra +\infty$, where $f(n)$ is computed in $\Fp$.
\par
\emph{(2)} For $p$ prime and $n\in\Fpt$, define the Kloosterman angle
$\theta_p(n)\in [0,\pi]$ by the relation
$$
\frac{S(n,1;p)}{\sqrt{p}}=\frac{1}{\sqrt{p}}
\sum_{x\in\Fpt}e\Bigl(\frac{nx+\bar{x}}{p}\Bigr)=2\cos\theta_p(n).
$$
\par
Then the angles $\theta_p(n)$ for $n\in I_p$ become equidistributed in
$[0,\pi]$ with respect to the Sato-Tate measure $2\pi^{-1}\sin^2
\theta d\theta$.
\end{corollary}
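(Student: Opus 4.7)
The plan is to apply Weyl's equidistribution criterion in both parts, reducing the question to cancellation in exponential/trace function sums over $I_p$, which will then be delivered by Theorem~\ref{th-slide}.

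For part (1), Weyl's criterion says that equidistribution of $\{f(n)/p\}$ in $[0,1]$ is equivalent to the statement that for every fixed nonzero integer $h$,
$$
\frac{1}{|I_p|}\sum_{n \in I_p}e\Bigl(\frac{h f(n)}{p}\Bigr) \longrightarrow 0\quad\text{as }p\to+\infty.
$$
For $p$ large enough in terms of $h$ and $f$, the hypothesis that $f$ is not a polynomial of degree $\leq 1$ over $\Qq$ is inherited by $hf\pmod p$ in $\Fp(X)$, so $\varphi(n)=e(hf(n)/p)$ is an isotypic Fourier trace function whose conductor is bounded purely in terms of $h$, $\deg f_1$ and $\deg f_2$. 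Theorem~\ref{th-slide} then yields
$$
\sum_{n \in I_p}\varphi(n)\ll c_h^{4}|I_p|\Bigl(\frac{p^{1/2}}{|I_p|}\Bigr)^{1/3}\leq c_h^{4}|I_p|\beta(p)^{-1/3}=o(|I_p|),
$$
which gives the equidistribution.

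For part (2), the Sato--Tate measure $\mu_{\mathrm{ST}}=(2/\pi)\sin^2\theta\,\diff\theta$ is the pushforward of the normalized Haar measure on $SU(2)$ by the trace, and the functions $X_m(\theta)=\sin((m+1)\theta)/\sin\theta$, $m\geq 0$, form an orthonormal basis of $L^2([0,\pi],\mu_{\mathrm{ST}})$ containing the constant $X_0\equiv 1$. Weyl's criterion therefore reduces the desired equidistribution to showing that, for each fixed $m\geq 1$,
$$
\frac{1}{|I_p|}\sum_{n\in I_p}X_m(\theta_p(n))\longrightarrow 0\quad\text{as }p\to+\infty.
$$
Now $n\mapsto X_m(\theta_p(n))$ on $\Fpt$ is the trace function of the $m$-th symmetric power of the Kloosterman sheaf $\KL_2$; by Katz, this sheaf is geometrically irreducible (hence isotypic), pointwise pure of weight zero, has conductor bounded in terms of $m$ alone, and is a Fourier sheaf. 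Applying Theorem~\ref{th-slide} once more yields a bound of shape $O(c_m^{4}|I_p|\beta(p)^{-1/3})=o(|I_p|)$.

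The main obstacle is not the analysis but the verification of the qualitative hypotheses of Theorem~\ref{th-slide}, namely that we are truly dealing with isotypic Fourier trace functions whose conductors stay bounded as $p\to+\infty$. In part (1) this is elementary Artin--Schreier theory, using only that $hf$ retains its degree and pole structure modulo $p$ for large $p$; in part (2) the corresponding inputs (geometric irreducibility, purity and conductor control for $\mathrm{Sym}^{m}\KL_2$) are nontrivial classical results of Katz on Kloosterman sheaves. Once these inputs are in place, each part is a one-line application of Theorem~\ref{th-slide}.
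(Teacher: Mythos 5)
Your proposal is correct and follows essentially the same route as the paper: Weyl's criterion reduces each part to cancellation in sums of trace functions over $I_p$ (the Artin--Schreier sheaf $\sheaf{L}_{\psi(hf)}$ in part (1), the symmetric powers of the Kloosterman sheaf, i.e.\ the functions $U_d(2\cos\theta_p(n))$, in part (2)), and the cancellation comes from the sliding-sum bound. The only cosmetic difference is that the paper invokes Proposition~\ref{pr-no-correlation} (via Corollary~\ref{cor-concrete}), of which Theorem~\ref{th-slide} is the stated special case you use, together with an explicit conductor estimate for $\sheaf{L}_{\psi(hf)}$ which you assert rather than compute.
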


For $\beta(p)/(\log p)\ra +\infty$, this follows from the
Polya-Vinogradov bound and Weil's method (for the first part) or the
extension by Michel~\cite[Cor. 2.9, 2.10]{michel} of the
equidistribution results of Katz~\cite{katz-gskm} for angles of
Kloosterman sums (for the second), but as far as we know, this
extension was not previously known in general. The first result cannot
be improved in general since $f(n)=n^2$ is a counterexample when
$\beta$ is a bounded function. We will also give another similar
application to the distribution of ``polynomial residues'' (see
Proposition~\ref{pr-pol-residues}).

\begin{remark}
  We recall that in many cases, one does expect non-trivial estimates
  for much shorter sums, but only relatively little progress has been
  made concerning this problem. We only recall two of the most
  classical results: when $\varphi(m) =\chi (m)$ is a non-trivial
  multiplicative character modulo $p$, the Burgess bound (see,
  e.g.,~\cite[Theorem 12.6]{IK}) is non-trivial for intervals of
  length $\gg p^{\frac{1}{4} +\eps}$ for any $\eps>0$; for $\varphi(m)
  =e(m^k/p)$, where $k\geq 3$ is an an integer, Weyl's method gives
  non-trivial estimates for intervals of length $\gg p^{\frac{1}{k}
    +\varepsilon}$ for any $\varepsilon >0$ (see, e.g.~\cite[\S
  8.2]{IK}.)
\end{remark}

\section{The sliding sum method}

Our basic inequality is very simple, and vaguely reminiscent of van
der Corput's shift inequality (see, e.g.,~\cite[Lemma
8.17]{IK}). % Beyond the case of short intervals modulo $m\geq 1$, it
% applies to sums over any subset of a finite abelian group which is
% ``almost invariant'' under sufficiently many shifts.
\par
We will use the following notation. Let $A$ be a finite abelian
group. For any subset $B\subset A$ and any function
$$
\varphi\,:\, A\rightarrow \Cc
$$
we denote
\begin{equation}\label{defS}
  S(\varphi;B)=\sum_{x\in B}\varphi(x).
\end{equation}
\par
We also define 
$$
\|\varphi\|_{\infty}=\max_{x\in A} |\varphi(x)|,\quad\quad
\|\varphi\|_{2}=\Bigr(\sum_{x\in A}|\varphi(x)|^2 \Bigl)^\frac{1}{2},
$$
and the \emph{additive correlations} of $\varphi$ given by
$$
\corr{\varphi}{a}=\sum_{x\in A}\varphi(x)\overline{\varphi(x+a)}
$$
for $a\in A$. We note that, by the Cauchy-Schwarz inequality, we have
\begin{equation}\label{eq-bound-correlations}
  |\corr{\varphi}{a}|\leq \|\varphi\|_2^2
\end{equation}
for all $a\in A$.
\par
% Since the case of intervals is likely to be the most important for
% applications, we will begin by describing our basic inequality in
% that case.
When $A=\Zz/m\Zz$ is a finite cyclic group, we define an
\emph{interval in $A$} to be a subset $B$ which is the reduction
modulo $m$ of an interval of consecutive integers, such that the
reduction is injective.

\begin{theorem}[Sliding-sum bound]\label{central} 
With notation as above, for any $m\geq 1$, any function $\varphi$ on
$A=\Zz/m\Zz$, any interval $I$ in $A$, and any subset $D\subset A$, we
have
\begin{multline}\label{eq-sliding}
  |S(\varphi;I)|\leq 
  2\|\varphi\|_{\infty}^{1/3}
  \Bigl\{
  |D|^{1/3}|I|^{1/3}\|\varphi\|_2^{2/3}+
  |I|^{2/3}\max_{a\notin D}|\corr{\varphi}{a}|^{1/3}\\
+\frac{2}{3}|I|^{2/3}\|\varphi\|_{\infty}^{2/3}
  \Bigr\}.
\end{multline}
\end{theorem}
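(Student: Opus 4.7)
The plan is to deploy a ``sliding sum'' reinterpretation. Fix an auxiliary integer $H\geq 1$ to be chosen at the end, and set $T(n)=\sum_{h=0}^{H-1}\varphi(n+h)$. Since the interval $I+h$ differs from $I$ in at most $2h$ elements, the sum $S(\varphi;I+h)$ differs from $S(\varphi;I)$ by at most $2h\|\varphi\|_\infty$. Summing over $0\leq h<H$ and noting that $\sum_h S(\varphi;I+h)=\sum_n \mathbf{1}_I(n)T(n)$, this yields the identity
\[
H\cdot S(\varphi;I)=\sum_{n\in I}T(n)+E,\qquad |E|\leq H^2\|\varphi\|_\infty.
\]

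The main step is to estimate $\sum_{n\in I}T(n)$ by Cauchy--Schwarz:
\[
\Bigl|\sum_{n\in I}T(n)\Bigr|^2\leq|I|\sum_{n\in A}|T(n)|^2=|I|\sum_{|d|<H}(H-|d|)\corr{\varphi}{d}.
\]
I split the shifts $d$ according to whether $d\in D$ or not, using $|\corr{\varphi}{d}|\leq\|\varphi\|_2^2$ (from~\eqref{eq-bound-correlations}) in the former case and $|\corr{\varphi}{d}|\leq M:=\max_{a\notin D}|\corr{\varphi}{a}|$ in the latter. Together with $\sqrt{a+b}\leq\sqrt{a}+\sqrt{b}$, this yields
\[
|S(\varphi;I)|\leq\sqrt{\frac{|I|(1+|D|)}{H}}\,\|\varphi\|_2+\sqrt{|I|M}+H\|\varphi\|_\infty.
\]

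It remains to optimize $H$ and to rewrite the $M$-term in the claimed form. Balancing the first and third terms leads to the choice $H\approx(|I|(1+|D|)\|\varphi\|_2^2/\|\varphi\|_\infty^2)^{1/3}$, which reproduces the first term $\sim|D|^{1/3}|I|^{1/3}\|\varphi\|_2^{2/3}\|\varphi\|_\infty^{1/3}$ of the statement. To convert $\sqrt{|I|M}$ into $|I|^{2/3}M^{1/3}\|\varphi\|_\infty^{1/3}$ I case-split on whether $M\leq|I|\|\varphi\|_\infty^2$: one direction is direct algebra, and in the opposite regime the second term of the theorem already exceeds the trivial bound $|I|\|\varphi\|_\infty\geq|S(\varphi;I)|$.

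The hard part is the constant bookkeeping: getting exactly the factor $2$ out front and producing the auxiliary term $\tfrac{2}{3}|I|^{2/3}\|\varphi\|_\infty^{2/3}$. These should emerge from taking $H$ as the ceiling of its optimum and from absorbing the residual boundary/rounding contributions, as well as treating the degenerate ranges $H<1$ or $H>|I|$ where the optimization is not interior.
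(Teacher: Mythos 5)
Your outline is correct, but it is a genuinely different route from the paper's. The paper compares upper and lower bounds for the \emph{complete} sliding average $\Sigma=\sum_{a\in A}|S(\varphi;I+a)|^2$: the upper bound comes from expanding $\Sigma=\sum_{x,y\in I}\corr{\varphi}{y-x}$ (so correlations enter at \emph{all} differences of elements of $I$), and the lower bound from the almost-invariance property $|T_s(I)|\geq s$ with $s\approx \tfrac12|S(\varphi;I)|/\|\varphi\|_{\infty}$, which is exactly where the factor $8^{1/3}=2$ and the residual $\tfrac23$-term originate. You instead run a Weyl/van der Corput-type differencing: average over only $H$ consecutive shifts, apply Cauchy--Schwarz over $I$, and expand $\sum_{n\in A}|T(n)|^2$ into correlations at the short range $|d|<H$, then optimize $H$. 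Each has advantages: the paper's version needs no auxiliary parameter and, since it uses only $|T_s(B)|\gg s$, transfers verbatim to subgroups and is the engine behind Theorem~\ref{th-gap}; yours needs the correlation bound only for $|d|<H$ and, in the main regime $M:=\max_{a\notin D}|\corr{\varphi}{a}|\leq |I|\,\|\varphi\|_{\infty}^2$, produces the sharper middle term $\sqrt{|I|M}\leq \|\varphi\|_{\infty}^{1/3}|I|^{2/3}M^{1/3}$.

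The bookkeeping you deferred does close, but three points must be made explicit. First, $1+|D|$ can be replaced by $|D|$: if $0\in D$ the shift $d=0$ is already counted there, while if $0\notin D$ then $M\geq \corr{\varphi}{0}=\|\varphi\|_2^2$, so $d=0$ may be routed into the $M$-bucket (this also handles $D=\emptyset$, where you simply take $H=1$). Second, bounding by $|D|$ the number of shifts $|d|<H$ whose residue lies in $D$ tacitly assumes $2H-1\leq m$ (no wrap-around); this is harmless but must be said: if $H_0:=(|I||D|\|\varphi\|_2^2/\|\varphi\|_{\infty}^2)^{1/3}\geq m/2$, then since $|I|\leq m$ one gets $8|D|\,\|\varphi\|_2^2\geq |I|^2\|\varphi\|_{\infty}^2$, so the first term of \eqref{eq-sliding} already exceeds the trivial bound $|I|\,\|\varphi\|_{\infty}\geq |S(\varphi;I)|$ and there is nothing to prove. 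Third, the constants do come out: taking $H=\lceil H_0\rceil\leq H_0+1$, the first and third terms of your intermediate bound are at most $2\|\varphi\|_{\infty}^{1/3}|D|^{1/3}|I|^{1/3}\|\varphi\|_2^{2/3}+\|\varphi\|_{\infty}$, and since $|I|\geq 1$ the leftover $\|\varphi\|_{\infty}$ is at most $\tfrac43\|\varphi\|_{\infty}|I|^{2/3}=2\|\varphi\|_{\infty}^{1/3}\cdot\tfrac23|I|^{2/3}\|\varphi\|_{\infty}^{2/3}$, i.e.\ precisely the last term of \eqref{eq-sliding}; combined with your case split $M\lessgtr |I|\,\|\varphi\|_{\infty}^2$ for the middle term, this recovers the theorem as stated.
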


We write this bound in terms of exact constants but it might be easier
to understand asymptotically as $m\ra +\infty$, thinking of the size
of $D$ and of the $L^{\infty}$-norm of $\varphi$ as quantities which
remain bounded by absolute constants while $m\ra +\infty$, and viewing
the last term as of smaller order of magnitude than the second (which
is almost universally true). Estimating the $L^2$-norm in terms of the
$L^{\infty}$-norm, the bound becomes roughly of order of magnitude
$$
p^{1/3}|I|^{1/3}+|I|^{2/3}\max_{a\notin D}|\corr{\varphi}{a}|^{1/3},
$$
where one can see already that the first term is $o(|I|)$ provided
$p^{1/2}=o(|I|)$; see Sections~\ref{sec-abstract} and~\ref{alggeo} for
discussion of the estimate of the second term, and for instances where
the asymptotic assumptions we described are reasonable.
\par

We now prove Theorem~\ref{central}, but first we isolate the property
of the interval $I$ that is used: given $a\in A=\Zz/m\Zz$ and an
interval $I$, we have
\begin{equation}\label{eq-almost-invariant}
|T_s(I)|\geq s
\end{equation}
for all $s\leq m$, where
\begin{equation}\label{eq-tsb}
T_s(I)=\{a\in A\,\mid\, |(a+I)\triangle I|\leq s\},
\end{equation}
with $\triangle$ denoting the symmetric difference. This is a
statement of ``almost'' invariance under additive shifts.  
\par 
Indeed, for any integer $a\in\Zz$, we have
$$
|I\triangle (a+I)|\leq 2|a|.
$$
\par
For an integer $s$ with $1\leq s<m$, the integers $a$ with $2|a|\leq
s$ are distinct modulo $m$, and thus we get
$$
|T_s(I)|\geq \sum_{2|a|\leq s}1=2\Bigl\lfloor
\frac{s}{2}\Bigr\rfloor+1 \geq s.
$$
\par
This remark concerning intervals has some interest, because the
property involved applies to at least another example.

\begin{example}
  Let $H\subset A$ be a \emph{subgroup} of $A$. Then for $a\in A$, we
  have
$$
|H\triangle (a+H)|=\begin{cases}
0&\text{ if } a\in H\\
2|H|&\text{ if } a\notin H,
\end{cases}
$$
since both $H$ and $a+H$ are $H$-cosets. Thus we have
$$
|T_s(H)|=\begin{cases}
|H|&\text{ if } s<2|H|\\
|A|&\text{ if } s\geq 2|H|,
\end{cases}
$$
and hence, for $1\leq s<2|H|$, we have
$$
|T_s(H)|\geq |H|\geq \frac{s}{2},
$$
which is very close to~(\ref{eq-almost-invariant}). Thus, the proof
below shows that the estimate of Theorem~\ref{central} applies, up to
a multiplicative factor, when $I$ is replaced by an arbitrary subgroup
of a finite abelian group $A$. This may be useful when $A$ is far from
cyclic, e.g., for the multiplicative group $\Fpt$ when $p-1$ has many
prime factors.
\end{example}

\begin{proof}[Proof of Theorem~\ref{central}]
  We can assume $\varphi\not=0$. We will then compare upper and lower
  bounds for the average
\begin{equation}\label{defSigma}
  \Sigma=\sum_{a\in A}\Bigl|\sum_{x\in B}\varphi(x+a)\Bigr|^2=
  \sum_{a\in A}|S(\varphi;B+a)|^2.
\end{equation}
\par
For the upper-bound, we expand the square and exchange the order of
summation, obtaining
\begin{align*}
  \Sigma&=\sum_{x,y\in B}\sum_{a\in
    A}\varphi(x+a)\overline{\varphi(y+a)} \\& =\sum_{x,y\in
    B}\sum_{a\in A}\varphi(a)\overline{\varphi(a+y-x)} =\sum_{x,y\in
    B}\corr{\varphi}{y-x}.
\end{align*}
\par
We split the sum according to whether $y-x$ is in $D$ or not. The
contribution of the $x$ and $y$ such that $y-x\notin D$ satisfies
$$
\Bigl|\sum_{\substack{x,y\in B\\y-x\notin D}}\corr{\varphi}{y-x}
\Bigr|\leq |B|^2\max_{a\notin D}|\corr{\varphi}{a}|,
$$
while, using~(\ref{eq-bound-correlations}), we have
$$
\Bigl|\sum_{\substack{x,y\in B\\y-x\in D}}\corr{\varphi}{y-x}
\Bigr|\leq \|\varphi\|_2^2\sum_{\substack{x,y\in I\\y-x\in D}}1 \leq
\|\varphi\|_2^2|B||D|.
$$
\par
Hence we get the upper-bound
$$
\Sigma\leq |B|\ |D|\ \|\varphi\|_2^2+|B|^2\max_{a\notin D}|
\corr{\varphi}{a}|.
$$
\par
For the lower-bound, let
$$
s=\Bigl\lfloor\frac{1}{2}\frac{|S(\varphi;B)|}{\|\varphi\|_{\infty}}
\Bigr\rfloor\leq \frac{|B|}{2}
$$
and use positivity to restrict the sum to $a\in T_s(B)$. For any $a\in
T_s(B)$, the set defined in~(\ref{eq-tsb}), we have
$$
|S(\varphi;B)-S(\varphi;a+B)|\leq |B\triangle (a+B)|\
\|\varphi\|_{\infty}
\leq \frac{1}{2}|S(\varphi;B)|,
$$
so that $|S(\varphi;a+B)|\geq \frac{1}{2}|S(\varphi;B)|$.  Therefore
we get
\begin{align}
  \Sigma\geq \sum_{a\in T_s(B)}|S(\varphi;a+B)|^2&\geq \frac{1}{4}
  |T_s(B)|\ |S(\varphi;B)|^2\nonumber\\
  &\geq \frac{1}{8\|\varphi\|_{\infty}}\ |S(\varphi;B)|^3-
  \frac{1}{4}|S(\varphi;B)|^2,\label{eq-basic-eighth}
\end{align}
by~(\ref{eq-almost-invariant}). Finally, combining the two bounds, we
obtain
$$
\frac{1}{8\|\varphi\|_{\infty}} \ |S(\varphi;B)|^3\leq |B|\ |D|\
\|\varphi\|_2^2+|B|^2\max_{b\notin D}|
\corr{\varphi}{b}|+\frac{1}{4}\|\varphi\|_{\infty}^2|B|^2 ,
$$
which gives the result.
\end{proof}

\begin{remark}
  By the Plancherel formula, the correlations sums $\corr{\varphi}{a}$
  have a dual formulation in terms of the Fourier transform
$$
\hat{\varphi}(\psi)=\frac{1}{\sqrt{|A|}}\sum_{x\in
  A}\varphi(x)\psi(x),
$$
defined on the dual group $\hat{A}$ of $A$: we have
\begin{equation}\label{eq-plancherel}
\corr{\varphi}{a}=\sum_{\psi\in \hat{A}}{\hat{\varphi}(\psi)
  \overline{\psi(a)\hat{\varphi}(\psi)}}= \sum_{\psi\in
  \hat{A}}{|\hat{\varphi}(\psi)|^2\ \overline{\psi(a)}}
\end{equation}
\par
This remark may be useful for special functions $\varphi$ for which
$|\hat{\varphi}|^2$ is well understood (see
Section~\ref{sec-special-case}). It is also interesting (dually) when
trying to apply the method to the dual group $\hat{A}$ which is
(non-canonically) isomorphic to $A$, since it reduces the correlation
sums to sums over $A$.
\end{remark}

\begin{remark}
% (1)   This proof does not not actually use the assumption that $A$ is
%   abelian. Thus the result holds without changes (except notational)
%   for any finite group. It is however unclear what applications might
%   exist in the case of non-abelian groups, due to the paucity of
%   examples of almost invariants subsets in such groups.
% \par
(1) We call the method ``sliding sum'' because of the intuitive
picture where we shift the graph of $\varphi$ by additive
translations, and observe that the sums small shifts of $I$ do not
differ too much from the original one.
\par
(2) The set $D$ is meant as containing the ``diagonal'' contributions.
It will contain $0$, but might in some cases be a bit larger. In
extending the method to higher dimensions, for instance, the dichotomy
introduced between shifts by $a\in D$ and $a\notin D$ might not be
sufficient to obtain a good bound. It might then be necessary to use a
finer ``stratification'' of the possible estimates for
$\corr{\varphi}{a}$. We will not pursue such situations here, but we
hope to come back to it later, in contexts involving trace functions
in more than one variable.
\end{remark}

\section{Abstract application}\label{sec-abstract}

We continue in a rather general setting before restricting our
attention to trace functions modulo primes.  We define:

\begin{definition}[Condition $\mathcal{H}(c)$]
  Let $A$ be a finite abelian group and let $\varphi\,:\, A\rightarrow
  \Cc$ be a function on $A$. For a real number $c\geq 1$, we say that
  \emph{$\varphi$ satisfies $\mathcal{H}(c)$} if
\par
(i) We have $\|\varphi\|_{\infty}\leq c$;
\par
(ii) There exists a subset $D\subset A$ with $|D|\leq c$ such that
\begin{equation}\label{eq-small}
  |\corr{\varphi}{a}|\leq c\sqrt{|A|}
\end{equation}
for $a\notin D$.
\end{definition}

The idea of this definition is that, except for special values of $a$
(the ``diagonal''), $\varphi$ should not correlate significantly with
its additive translate by $a$; of course $D$ should contain $0$, but
one can allow some more exceptional shifts. Note that this is a
property of $\varphi$ only, and not of any subset of $A$ on which we
might want to sum its values.
\par
The main estimate of Theorem~\ref{central} gives immediately:

\begin{corollary}\label{cor-concrete}
  Let $A=\Zz/m\Zz$ for some $m\geq 1$, and let $c\geq 1$ be a
  parameter. For any interval $I\subset A$ and any function $\varphi$
  on $A$ satisfying $\mathcal{H}(c)$, we have
$$
|S(\varphi;I)|\leq 2
c^{4/3}\left(|A|^{1/3}|I|^{1/3}+2|A|^{1/6}|I|^{2/3}\right),
$$
and if $|I|>\sqrt{|A|}$, we have
\begin{equation}\label{eq-better-1}
  |S(\varphi;I)|\leq 6
  c^{4/3}|I|\Bigl(\frac{|A|^{1/2}}{|I|}\Bigr)^{1/3}.
\end{equation}
% \par
% In particular, if $I$ is an interval in $\Zz/m\Zz$ and $\varphi$
% satisfies $\mathcal{H}(c)$ on $\Zz/m\Zz$, then we have
% \begin{equation}\label{eq-better0}
% |S(\varphi;I)|\leq 2c^{4/3}(m^{1/3}|I|^{1/3}+2m^{1/6}|I|^{2/3}),
% \end{equation}
% and if $|I|>\sqrt{m}$, we have
% \begin{equation}\label{eq-better}
%   |S(\varphi;I)|\leq 6c^{4/3}|I|\Bigl(\frac{m}{|I|^2}\Bigr)^{1/6}.
% \end{equation}
\end{corollary}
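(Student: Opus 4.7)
The plan is to apply Theorem \ref{central} with the specific subset $D \subset A$ provided by hypothesis $\mathcal{H}(c)$, and then insert the numerical bounds supplied by that hypothesis. Specifically, the assumption $\|\varphi\|_\infty \leq c$ gives $\|\varphi\|_2^2 \leq |A|\,\|\varphi\|_\infty^2 \leq c^2 |A|$, so $\|\varphi\|_2^{2/3} \leq c^{2/3}|A|^{1/3}$. Combined with $|D| \leq c$, the first term inside the braces of \eqref{eq-sliding} becomes at most $c \cdot |A|^{1/3}|I|^{1/3}$. The second term inside the braces is controlled by $|\corr{\varphi}{a}| \leq c\sqrt{|A|}$ for $a \notin D$, giving $c^{1/3}|A|^{1/6}|I|^{2/3}$. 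The third term is at most $\tfrac{2}{3}c^{2/3}|I|^{2/3}$. Multiplying by the outer factor $2\|\varphi\|_\infty^{1/3} \leq 2c^{1/3}$ produces the three summands
\[
2c^{4/3}|A|^{1/3}|I|^{1/3},\quad 2c^{2/3}|A|^{1/6}|I|^{2/3},\quad \tfrac{4}{3}c\,|I|^{2/3}.
\]

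Next I would absorb the last two summands into a single term of the shape $4c^{4/3}|A|^{1/6}|I|^{2/3}$. Since $c \geq 1$, one has $c^{2/3} \leq c^{4/3}$, so the middle summand is at most $2c^{4/3}|A|^{1/6}|I|^{2/3}$. For the third summand, use $c \leq c^{4/3}$ and the trivial inequality $\tfrac{4}{3} \leq 2|A|^{1/6}$ (which holds since $|A| \geq 1$) to bound it by $2c^{4/3}|A|^{1/6}|I|^{2/3}$. Adding the two yields the $4c^{4/3}|A|^{1/6}|I|^{2/3}$ promised by the statement, establishing the first claimed inequality.

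For the improved bound \eqref{eq-better-1}, the hypothesis $|I| > \sqrt{|A|}$ implies $|A|^{1/2} < |I|$, hence $|A|^{1/3}|I|^{1/3} \leq |A|^{1/6}|I|^{2/3}$. Inserting this into the first term allows me to rewrite the total bound as $6c^{4/3}|A|^{1/6}|I|^{2/3}$, and recognizing $|A|^{1/6}|I|^{2/3} = |I|(|A|^{1/2}/|I|)^{1/3}$ finishes the proof. The whole argument is essentially bookkeeping: no genuine obstacle arises, since all analytic content has been packaged into Theorem \ref{central}. The only mild care needed is tracking that $c \geq 1$ and $|A| \geq 1$ allow the several $c$-exponents and the constant $\tfrac{4}{3}$ to be cleanly absorbed into the stated clean form $2c^{4/3}(\cdots)$.
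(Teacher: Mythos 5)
Your proof is correct and is exactly the routine substitution the paper intends when it says the corollary follows ``immediately'' from Theorem~\ref{central}: plug $\|\varphi\|_\infty\leq c$, $|D|\leq c$, $\|\varphi\|_2^2\leq c^2|A|$ and $|\corr{\varphi}{a}|\leq c\sqrt{|A|}$ into \eqref{eq-sliding} and absorb the small terms using $c\geq 1$, $|A|\geq 1$. The passage to \eqref{eq-better-1} via $|A|^{1/3}|I|^{1/3}\leq |A|^{1/6}|I|^{2/3}$ when $|I|>\sqrt{|A|}$ is also the intended step, so there is nothing to add.
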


In comparison with~(\ref{eq-fourier}) (with $m=p$), the
bound~(\ref{eq-better-1}) replaces an estimate in terms of the
supremum of the Fourier transform with one for an ``almost'' supremum
of the additive correlation sums $\corr{\varphi}{a}$. It is
interesting to note that, in contrast with the Fourier technique, our
method is \emph{non-linear} in terms of the function $\varphi$ (a
similar feature appears in the correlation sums in~\cite{FKM1}).
\par
For fixed $c$, the estimate~\eqref{eq-better-1} is non-trivial as long
as
$$
|I|\gg \sqrt{m},
$$
where the implied constant depends on $c$, and the point of the result
is that this range of uniformity goes beyond that of the classical
completion estimates.
\par
On the other hand, if we consider functions $\varphi$ such that
$\|\hat{\varphi}\|_{\infty}\ll 1$, the Fourier estimate
\eqref{eq-fourier} is stronger than \eqref{eq-better-1} as soon as
$|I|\gg m^\frac{1}{2} (\log m)^{3/2}$.

\section{Sums over generalized arithmetic progressions}
\label{sec-gap}

This section is essentially independent of the remainder of the paper
and may be skipped in a first reading. 
\par
Let $A$ be a finite abelian group.  For a fixed integer $k\geq 1$,
recall (see~\cite[p. xii]{tao-vu}) that a \emph{$k$-dimensional
  (proper) generalized arithmetic progression} $B\subset A$ is a set
of elements of the form
\begin{equation}\label{eq-gap}
b=a_0+n_1a_1+\cdots +n_ka_k
\end{equation}
where $(a_0,\ldots,a_k)\in A^{k+1}$ and $n_i$ is in some interval
$I_i$ of integers of length $|I_i|\geq 2$, and if furthermore (this is
the meaning of being ``proper'') this representation of any $b\in B$
is unique.
\par
If $B\subset \Zz/m\Zz$ is a proper $k$-dimensional generalized
arithmetic progression, Shao~\cite{shao} has shown that the $L^1$-norm
of the Fourier transform of the characteristic function of $B$ is $\ll
(\log m)^k$, where the implied constant depends only on $k$. Thus, the
completion estimate~(\ref{eq-fourier}) gives a generalized
Polya-Vinogradov estimate of the type
\begin{equation}\label{eq-pv-gap}
\sum_{x\in B}\varphi(x)\ll \|\hat{\varphi}\|_{\infty} \sqrt{m}(\log
m )^k,
\end{equation}
where the implied constant depends only on $k$, which is non-trivial
as soon as $|B|\gg \sqrt{m}(\log m)^k$, for functions with bounded
Fourier transforms (see Remark~\ref{rm-general} for examples.)
\par
We will adapt the sliding-sum method to prove an estimate for sums
over generalized arithmetic progressions which is non-trivial in many
cases when the size of $B$ is slightly larger than
$\sqrt{m}$, thus bridging the gap between this range and the
completion range. For simplicity, we only consider the problem for
functions satisfying Condition $\mathcal{H}(c)$ for some $c\geq 1$. 

\begin{theorem}[Sums over generalized arithmetic progressions]\label{th-gap}
Let $k\geq 1$ be an integer and let $c\geq 1$ be a real parameter. 
For $m\geq 1$ an integer, let $A=\Zz/m\Zz$, and let $B\subset A$ be a
proper generalized arithmetic progression of dimension $k$ such that
$|B|\geq \sqrt{m}$.
\par
Then, for any $\varphi\,:\, A\lra \Cc$ satisfying Condition
$\mathcal{H}(c)$, we have
\begin{equation}\label{eq-gap-goal}
S(\varphi;B)\ll
|B|^{1-1/(k+2)}m^{1/(2(k+2))}=|B|\Bigl(\frac{\sqrt{m}}{|B|}\Bigr)^{1/(k+2)},
\end{equation}
where the implied constant depends only on $k$ and $c$.
\end{theorem}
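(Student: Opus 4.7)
The plan is to mimic the proof of Theorem~\ref{central} with $I$ replaced by the GAP $B$, so the central task is to quantify the ``almost invariance'' of $B$ under additive shifts in $A$. In the $k$-dimensional setting the natural analogue of~\eqref{eq-almost-invariant} is
$$
|T_s(B)| \gg_k \frac{s^k}{|B|^{k-1}}, \qquad 1 \leq s \leq |B|,
$$
which for $k=1$ recovers the interval estimate used in Section~2. To prove this lemma, write $B = \{a_0 + n_1 a_1 + \cdots + n_k a_k : n_i \in I_i\}$ with $L_i = |I_i|$, so that $|B| = L_1 \cdots L_k$, and consider shifts of the form $a = j_1 a_1 + \cdots + j_k a_k$. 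Since $(a+B) \cap B$ contains the elements $a_0 + \sum n_i a_i$ with $n_i \in I_i \cap (I_i + j_i)$, one obtains
$$
|(a+B) \triangle B| \leq 2|B| \sum_{i=1}^k \frac{|j_i|}{L_i}.
$$
It suffices therefore to count integer vectors $(j_1,\ldots,j_k)$ with $|j_i| \leq \sigma_i$ satisfying $\sum_i \sigma_i/L_i \leq s/(2|B|)$: maximizing $\prod \sigma_i$ under this linear constraint via AM-GM (taking $\sigma_i$ proportional to $L_i$) produces $\gg_k s^k/|B|^{k-1}$ such vectors, and the properness of $B$ guarantees that distinct tuples $(j_i)$ yield distinct shifts in $A$.

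With this combinatorial lemma in hand, one reruns the double counting of Theorem~\ref{central} on $\Sigma = \sum_{a \in A}|S(\varphi;B+a)|^2$. The upper bound carries over verbatim: invoking Condition $\mathcal{H}(c)$ together with $\|\varphi\|_2^2 \leq c^2 m$ yields
$$
\Sigma \ll_c |B| m + |B|^2 \sqrt{m}.
$$
For the lower bound, choosing $s = \lfloor |S(\varphi;B)|/(2\|\varphi\|_\infty) \rfloor$ and arguing as in~\eqref{eq-basic-eighth},
$$
\Sigma \geq \tfrac{1}{4}|T_s(B)| |S(\varphi;B)|^2 \gg_{k,c} \frac{|S(\varphi;B)|^{k+2}}{|B|^{k-1}},
$$
provided $|S(\varphi;B)| \gg c$ (otherwise~\eqref{eq-gap-goal} is already trivial given $|B| \geq \sqrt{m}$). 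Combining the two estimates gives $|S(\varphi;B)|^{k+2} \ll_{k,c} |B|^k m + |B|^{k+1}\sqrt{m}$, and the hypothesis $|B| \geq \sqrt{m}$ makes the second term dominant, yielding~\eqref{eq-gap-goal}.

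The principal difficulty is the combinatorial lemma above: one must leverage properness of $B$ to ensure that the $\sim s^k/|B|^{k-1}$ candidate shifts $a = \sum j_i a_i$ really are distinct in $A$, which reduces to the fact that $\sum m_i a_i = 0$ with $|m_i| < L_i$ forces every $m_i$ to vanish. Some care is also needed when some $L_i$ are small compared with $2k|B|/s$, where the AM-GM optimum calls for fractional $\sigma_i$; in such regimes one restricts the optimization to coordinates with sufficiently large $L_i$, which affects only the constant depending on $k$. The final exponent $(k+1)/(k+2)$ is dictated by the AM-GM balance and agrees with the $k=1$ case of Theorem~\ref{th-slide}.
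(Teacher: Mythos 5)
Your proof is correct, but it follows a genuinely different route from the paper. You prove a single ``almost invariance'' lemma generalizing \eqref{eq-almost-invariant} to proper GAPs, namely $|T_s(B)|\gg_k s^k/|B|^{k-1}$ for $1\leq s\leq |B|$, and then run the double counting of Theorem~\ref{central} once; the paper instead argues by induction on the dimension $k$, splitting into the case where all defining intervals satisfy $|I_i|\geq 4kT$ with $T=(|B|/\sqrt m)^{1/(k+2)}$ (where it produces $\gg |B|/T^k$ good shifts $\sum\lambda_i a_i$, $|\lambda_i|\leq |I_i|/(4kT)$, exactly as in your long-coordinate regime) and the case where some intervals are short, which it handles by decomposing $B$ into $L\ll T^j$ subprogressions of dimension $k-j$ of size $\geq\sqrt m$ and invoking the induction hypothesis. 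Your lemma is sound: with $t=s/(2|B|)\leq 1/2$ and $\sigma_i=tL_i/(2k)$, the bound $|(a+B)\triangle B|\leq 2|B|\sum_i|j_i|/L_i$ follows from $|I_i\cap(I_i+j_i)|\geq L_i-|j_i|$ and $1-\prod(1-x_i)\leq\sum x_i$, the number of admissible tuples is $\prod_i(2\lfloor\sigma_i\rfloor+1)\geq\prod_i\sigma_i=t^k|B|/(2k)^k$ (so the ``fractional $\sigma_i$'' worry you raise never actually costs anything beyond the $k$-dependent constant: coordinates with $\sigma_i<1$ contribute a factor $1\geq\sigma_i$), and properness gives distinctness since $j_i\neq j_i'$ with $|j_i-j_i'|\leq 2\sigma_i\leq L_i-1$ and $\sum(j_i-j_i')a_i=0$ would contradict uniqueness of representations. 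Combining $\Sigma\gg_{k,c}|S(\varphi;B)|^{k+2}/|B|^{k-1}$ (after disposing of the trivial case $|S(\varphi;B)|\ll c$, which is dominated by the right-hand side since that is $\geq\sqrt m$) with $\Sigma\ll_c |B|m+|B|^2\sqrt m\ll |B|^2\sqrt m$ indeed yields \eqref{eq-gap-goal}. What each approach buys: yours is more uniform and avoids induction entirely, giving a direct $k$-dimensional analogue of the interval estimate \eqref{eq-almost-invariant} that could be reused as a standalone statement; the paper's inductive decomposition avoids having to formulate any shift-counting lemma beyond the well-balanced case, at the cost of the case analysis on short defining intervals and the bookkeeping of $\beta(B_a)$ for the subprogressions.
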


As in the case of Theorem~\ref{central}, the estimate is non-trivial
as soon as $|B|\geq \alpha \sqrt{m}$ for some $\alpha$ depending on
$k$ and $c$.

\begin{proof}
We will use induction on $k\geq 1$, but we begin by a general argument
to derive the base case $k=1$ from scratch instead of appealing to the
previous result.
\par
Let $B\subset \Zz/m\Zz$ be a proper generalized arithmetic progression
of dimension $k$, formed with integers~(\ref{eq-gap}), where $n_i\in
I_i$. It will be convenient to write
$$
|B|=\beta \sqrt{m}
$$
so that $\beta\geq 1$, and we may write $\beta=\beta(B)$ when the set
under consideration changes.
\par
Let $T\geq 1$.  We distinguish two cases in trying to bound
$|S(\varphi;B)|$.
\par
(1) If
\begin{equation}\label{eq-small-bound}
  |S(\varphi;B)|\leq \frac{|B|\|\varphi\|_{\infty}}{T}\ll \frac{|B|}{T},
\end{equation}
we will just use this estimate (and thus $T$ should be chosen to
ensure that it gives the required result, but we do not fix its value
immediately in order to clarify the argument).
\par
(2) Otherwise, we have
\begin{equation}\label{eq-large-sum}
  |S(\varphi;B)|> \frac{|B|\|\varphi\|_{\infty}}{T},
\end{equation}
and we proceed by sliding sums, comparing upper and lower bounds for
$$
\Sigma=\sum_{a\in A}{\Bigl|\sum_{x\in B}\varphi(x+a)\Bigr|^2}
$$
as before. We obtain immediately the upper-bound
\begin{equation}\label{!}
\Sigma \ll m|B|+m^{1/2}|B|^2\ll m^{1/2}|B|^2
\end{equation}
since $\varphi$ satisfies $\mathcal{H}(c)$ and $|B|\geq \sqrt{m}$,
where the implied constant depends only on $c$.
\par
On the other hand, for an element 
$$
a=\sum_{i=1}^k \lambda_i a_i
$$
with $|\lambda_i|\leq |I_i|/2$ for all $i$, we see that
\begin{equation}\label{**}
|S(\varphi;B+a)|\geq |S(\varphi;B)|-
2k\max_i\Bigl(\frac{\|\varphi\|_{\infty}|B||\lambda_i|}{|I_i|}\Bigr).
\end{equation}
\par
We now distinguish two possibilities concerning the size of the
intervals defining $B$. We select
$T=(|B|/m^{1/2})^{1/(k+2)}=\beta^{1/(k+2)}$, and we assume first that,
for all $i$, we have
$$
|I_i|\geq 4kT.
$$
\par
Note that, if $k=1$, there is only one interval involved and
$|I_1|=\beta\sqrt{m}\geq T=\beta^{1/3}$, so this assumption is always
valid when $k=1$.
\par 
Taking all
$$
|\lambda_i|\leq \frac{|I_i|}{4kT},
$$
for $1\leq i\leq k$, we obtain $\gg \frac{|B|}{T^k}$ distinct shifts
for which
$$
|S(\varphi;B+a)|\geq |S(\varphi;B)|-
\frac{2k\|\varphi\|_{\infty}|B|}{4kT}\geq \frac{1}{2}|S(\varphi;B)|
$$
by~(\ref{eq-large-sum}) and \eqref{**}. Hence we have the lower bound
$$
\Sigma\gg \frac{|B|}{T^k}|S(\varphi;B)|^2,
$$
where the implied constant depends only on $k$. Comparing with \eqref{!}, we obtain
\begin{equation}\label{x}
|S(\varphi;B)|^2\ll T^{k}m^{1/2}|B|
\end{equation}
where the implied constant depends on $k$ and $c$.
\par
With our choice of $T$, we have
$$
\frac{|B|}{T}=T^{k/2}m^{1/4}|B|^{1/2}=|B|\beta^{-1/(k+2)},
$$
and therefore, by \eqref{x},
$$
S(\varphi;B)\ll |B|^{1-1/(k+2)}m^{1/(2(k+2))}
$$
in this case, as claimed. In particular, this establishes the result
when $k=1$.
\par
We now proceed to conclude using induction on $k$. Since the case
$k=1$ is established, we may assume that we consider $k\geq 2$, and
that the estimate of the theorem is valid for progressions of
dimension $\leq k-1$.
\par
We consider again $T=\beta^{1/(k+2)}$, and we assume that the
intervals are ordered in such a way that
$$
|I_1|\leq |I_2|\leq \cdots \leq |I_k|,
$$
and we are now assuming that for some $j$ with $1\leq j\leq k$, we
have
$$
|I_1|\leq \cdots \leq |I_j|<4kT\leq |I_{j+1}|,
$$
\par
Note that
$$
L=\prod_{i\leq j}|I_i|\ll T^j=\beta^{j/(k+2)}
$$
and therefore
\begin{equation}\label{eq-beta}
\frac{|B|}{L}=\frac{\beta m^{1/2}}{L}\gg m^{1/2}\beta^{1-j/(k+2)},
\end{equation}
which implies in particular that $L<|B|$, i.e., that $j<k$, for $m$ sufficiently large in terms of $k$. 
\par
The set $B$ decomposes into a disjoint union of $L$ proper generalized
arithmetic progressions (noted $B_{a}$)  of dimension $k-j<k$, each of size $|B|/L\geq
\sqrt{m}$. Over each of these, the function $\varphi$ satisfies the
Condition $\mathcal{H}(c)$.
\par
By induction, over each subprogression $B_a$, we have
$$
\sum_{x\in B_a}\varphi(x)\ll |B_a|
\beta(B_a)^{-1/(k-j+2)}=\frac{|B|}{L} \beta(B_a)^{-1/(k-j+2)},
$$
where the implied constant depends only on $k$ and
$c$. By~(\ref{eq-beta}), since $|B_a|=|B|/L$, we have $\beta(B_a)\geq
\beta^{1-j/(k+2)}$ (where $\beta=\beta(B)$) so that
$$
\beta(B_a)^{1/(k-j+2)}\geq \beta^{1/(k+2)},
$$
and hence 
$$
S(\varphi;B_a)\ll \frac{|B|}{L}\beta^{-1/(k+2)},
$$
for each subprogression. Summing over the $L$ progressions $B_{a}$ of
dimension $k-j$, we get
$$
S(\varphi;B)\ll |B|\beta^{-1/(k+2)},
$$
as desired.
\end{proof}

\section{Trace functions: the additive case}\label{alggeo}

The trace functions of suitable $\ell$-adic sheaves modulo primes,
which we have studied, and used in applications, in a number of recent
works (\cite{FKM1,FKM2,FKM3,counting-sheaves,gowers-norms}), provide
many examples of functions on $A=\Fp=\Zz/p\Zz$ satisfying
$\mathcal{H}(c)$ for $c$ bounded independently of $p$.
\par
To state this fact in a precise way, we recall some standard
conventions. For any prime $\ell$, we fix an isomorphism $\iota\,:\,
\bar{\Qq}_{\ell}\simeq \Cc$, and we use it implicitly as an
identification for any $\ell$-adic number. An \emph{isotypic Fourier
  sheaf} modulo a prime $p$ is defined to be a constructible
middle-extension $\ell$-adic sheaf $\sheaf{F}$ on $\mathbf{A}^1_{\Fp}$
for some $\ell\not=p$, which is pointwise pure of weight $0$,
geometrically isotypic, and of Fourier type in the sense of Katz,
i.e., its geometric irreducible component is not an Artin-Schreier
sheaf $\sheaf{L}_{\psi}$ for some additive character $\psi$. 
\par
The conductor of a middle-extension $\ell$-adic sheaf on
$\mathbf{A}^1_{\Fp}$ is defined to be
$$
\cond(\sheaf{F})=\rank(\sheaf{F})+n(\sheaf{F})+\sum_{x\in
  S(\sheaf{F})}\swan_x(\sheaf{F}),
$$
where $S(\sheaf{F})\subset \mathbf{P}^1(\bar{\Ff}_p)$ is the set of
singularities of $\sheaf{F}$, $n(\sheaf{F})$ is the cardinality of $S$
and $\swan_x$ denotes the Swan conductor at such a singularity. Thus
$\cond(\sheaf{F})$ is a positive integer measuring the complexity of
$\sheaf{F}$.

\begin{example}
Let 
$$
\varphi(n)=\chi(f(n))e\Bigl(\frac{g(n)}{p}\Bigr)
$$
where $\chi$ is a Dirichlet character modulo $p$ and $f, g\in \Qq(X)$
are fixed rational functions. Then, for all primes $p$ such that $f$
and $g$ modulo $p$ satisfy one of the conditions described in the
introduction, the function $\varphi$ is a trace function associated to
a middle-extension sheaf $\sheaf{F}$ with
$$
\cond(\sheaf{F})\ll \deg(f_1)+\deg(f_2)+\deg(g_1)+\deg(g_2),
$$
where the implied constant is absolute.
\end{example}

Given a middle-extension $\sheaf{F}$ modulo $p$, we denote by
$t_{\sheaf{F}}$ its \emph{trace function}, which is the function
$$
t_{\sheaf{F}}\,:\, \Fp\rightarrow \Cc
$$
defined by
$$
t_{\sheaf{F}}(x)=\iota(\frtr{\sheaf{F}}{\Fp}{x}),
$$
the trace of the action of the Frobenius of $\Fp$ acting on the stalk
at $x\in \mathbf{A}^1(\Fp)$ of $\sheaf{F}$. It is known that
$$
|t_{\sheaf{F}}(x)|\leq \cond(\sheaf{F})
$$
for all $x\in\Fp$ (for $x$ not a singularity of the sheaf, this
follows from the fact that the trace is the sum of $\rank(\sheaf{F})$
complex numbers of modulus $\leq 1$, and for singularities, it is a
consequence of the fact that $\sheaf{F}$ is a middle-extension and a
result of Deligne.)
\par
The crucial fact we use to control correlations is the following
version of Deligne's Riemann Hypothesis:

\begin{theorem}\label{th-rh}
  Let $p$ be a prime number, $c\geq 1$, and let $\sheaf{F}_1$ and
  $\sheaf{F}_2$ be two isotypic Fourier sheaves modulo $p$ with
  conductor $\leq c$. If the geometric irreducible components of
  $\sheaf{F}_1$ and $\sheaf{F}_2$ are not isomorphic, then we have
\begin{gather*}
\Bigl|\sum_{x\in \Fp}t_{\sheaf{F}_1}(x)\overline{t_{\sheaf{F}_2}(x)}
\Bigr|\leq 5c^3\sqrt{p},\\
\Bigl|\sum_{x\in \Fpt}t_{\sheaf{F}_1}(x)\overline{t_{\sheaf{F}_2}(x)}
\Bigr|\leq 6c^3\sqrt{p}.
\end{gather*}
\end{theorem}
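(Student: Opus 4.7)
The plan is to turn the correlation sum into an $\ell$-adic cohomological trace sum: apply the Grothendieck-Lefschetz trace formula, kill the $H^{0}_c$ and $H^{2}_c$ terms by the non-isomorphism hypothesis, bound the weights on $H^{1}_c$ by Deligne's Riemann Hypothesis, and control $\dim H^{1}_c$ via the Grothendieck-Ogg-Shafarevich formula. Concretely, let $j:U\hookrightarrow\Aa^{1}_{\Fp}$ be the open set on which both $\sheaf{F}_i$ are lisse, and form the middle-extension tensor product
$$\sheaf{G}=j_{*}\bigl(j^{*}(\sheaf{F}_1\otimes\sheaf{F}_2^{\vee})\bigr).$$
Since each $\sheaf{F}_i$ is pure of weight $0$, all Frobenius eigenvalues on stalks are unitary, so $t_{\sheaf{F}_2^{\vee}}(x)=\overline{t_{\sheaf{F}_2}(x)}$ on $U$ and $t_{\sheaf{G}}(x)=t_{\sheaf{F}_1}(x)\overline{t_{\sheaf{F}_2}(x)}$ at every $x\in U(\Fp)$; the discrepancy at the at-most-$2c$ singular points contributes $O(c^{3})$, absorbed into the final constant.

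Next, the Grothendieck-Lefschetz trace formula gives
$$\sum_{x\in\Aa^{1}(\Fp)}t_{\sheaf{G}}(x)=\sum_{i=0}^{2}(-1)^{i}\Tr\bigl(\mathrm{Frob}_p\mid H^{i}_c(\Aa^{1}_{\bar\Ff_p},\sheaf{G})\bigr).$$
The hypothesis that the geometric irreducible components of the isotypic sheaves $\sheaf{F}_1$, $\sheaf{F}_2$ are non-isomorphic means $j^{*}\sheaf{G}$ has no geometrically trivial subrepresentation or quotient under its geometric monodromy; equivalently, the space of geometric monodromy invariants vanishes. A standard argument for middle extensions of such lisse sheaves forces $H^{0}_c(\Aa^{1},\sheaf{G})=H^{2}_c(\Aa^{1},\sheaf{G})=0$, and Deligne's Riemann Hypothesis applied to $\sheaf{G}$ (pure of weight $0$) then gives $|\Tr(\mathrm{Frob}_p\mid H^{1}_c)|\leq\dim H^{1}_c\cdot\sqrt{p}$.

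Finally, I would control $\dim H^{1}_c$ via the Euler-Poincar\'e (Grothendieck-Ogg-Shafarevich) formula, which for middle-extension sheaves on $\Aa^{1}$ gives a bound of the shape $\dim H^{1}_c\leq C\cdot\cond(\sheaf{G})$. Using $\rank(\sheaf{G})=\rank(\sheaf{F}_1)\rank(\sheaf{F}_2)\leq c^{2}$, $S(\sheaf{G})\subseteq S(\sheaf{F}_1)\cup S(\sheaf{F}_2)$, and the standard Swan-conductor inequality
$$\swan_x(\sheaf{F}_1\otimes\sheaf{F}_2^{\vee})\leq\rank(\sheaf{F}_1)\swan_x(\sheaf{F}_2)+\rank(\sheaf{F}_2)\swan_x(\sheaf{F}_1),$$
one obtains $\cond(\sheaf{G})\ll c^{3}$, and careful bookkeeping extracts the explicit bound $5c^{3}$. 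For the sum over $\Fpt$ one subtracts the term at $x=0$, of modulus at most $c^{2}\leq c^{3}\sqrt{p}$, which accounts for the shift from $5c^{3}$ to $6c^{3}$. The main obstacle is not any deep idea -- the qualitative bound $O_c(\sqrt{p})$ is immediate from Deligne -- but the arithmetic of pinning down the explicit constants $5$ and $6$, which requires tightening each inequality (the tensor-product conductor bound, the Euler-Poincar\'e estimate, and the boundary contribution at singular points where local inertia invariants cause $t_{\sheaf{G}}$ to differ from $t_{\sheaf{F}_1}\overline{t_{\sheaf{F}_2}}$) against the precise definition of $\cond$ used in the paper.
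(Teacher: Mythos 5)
Your proposal is correct and is essentially the paper's argument: the paper restricts to the common lisse locus $U$ with $|\Fp-U(\Fp)|\leq 2c$, bounds the boundary points by $c^2$ each, invokes the quasi-orthogonality estimate $3c^3\sqrt{p}$ of \cite[Lemma 3.5]{counting-sheaves} for the sum over $U(\Fp)$, and subtracts the $x=0$ term (of modulus $\leq c^2$) to pass from $5c^3\sqrt p$ to $6c^3\sqrt p$. The only difference is that you re-derive the content of that cited lemma (Lefschetz trace formula, vanishing of $H^0_c$ and $H^2_c$ via Schur's lemma from the non-isomorphism of the geometric components, Weil II on $H^1_c$, and Grothendieck--Ogg--Shafarevich with the Swan tensor inequality to bound $\dim H^1_c\ll c^3$) rather than citing it, and your bookkeeping of the explicit constants, while asserted rather than carried out, does go through along the lines you indicate.
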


\begin{proof}
  Let $U$ be a non-empty open set of $\Aa^1_{\Fp}$ where $\sheaf{F}_1$
  and $\sheaf{F}_2$ are both lisse; one can find such a $U$ with
  $|\Fp-U(\Fp)|\leq 2c$, which we assume to be true. We then have
$$
\Bigl|\sum_{x\in \Fp}t_{\sheaf{F}_1}(x)\overline{t_{\sheaf{F}_2}(x)}
\Bigr|\leq \Bigl|\sum_{x\in
  U(\Fp)}t_{\sheaf{F}_1}(x)\overline{t_{\sheaf{F}_2}(x)} \Bigr|
+|\Fp-U(\Fp)|c^2
$$
since $|t_{\sheaf{F}_i}(x)|\leq c$ for $i=1$, $2$ and all $x\in\Fp$.
\par
By the quasi-orthonormality result of~\cite[Lemma
3.5]{counting-sheaves} (or its obvious extension to geometrically
isotypic sheaves), which follows from the Riemann Hypothesis over
finite fields, we have 
$$
\Bigl|\sum_{x\in
  U(\Fp)}{t_{\sheaf{F}_1}(x)\overline{t_{\sheaf{F}_2}(x)}}\Bigr| \leq
3c^3\sqrt{p},
$$
and the first bound follows.
\par
For the sum over $\Fpt$, we just write
$$
\Bigl|\sum_{x\in \Fpt}t_{\sheaf{F}_1}(x)\overline{t_{\sheaf{F}_2}(x)}
\Bigr|
\leq \Bigl|\sum_{x\in
  \Fp}t_{\sheaf{F}_1}(x)\overline{t_{\sheaf{F}_2}(x)} 
\Bigr|+|t_{\sheaf{F}_1}(0)\overline{t_{\sheaf{F}_2}(0)}|
\leq 5c^3\sqrt{p}+c^2.
$$
\end{proof}

We can now apply the sliding sum method to trace functions:

\begin{proposition}\label{pr-no-correlation}
  Let $p$ be a prime number, and let $\sheaf{F}$ be an isotypic
  Fourier sheaf modulo $p$ with conductor $c$. Then the trace function
  $t_{\sheaf{F}}$ satisfies $\mathcal{H}(5c^3)$. In particular, we
  have
$$
\Bigl|\sum_{x\in I}t_{\sheaf{F}}(x)\Bigr|\leq
18c^4(p^{1/3}|I|^{1/3}+2p^{1/6}|I|^{2/3})
$$
for any interval $I\subset \Fp$, and
$$
\Bigl|\sum_{x\in I}t_{\sheaf{F}}(x)\Bigr|\leq
54c^{4}|I|\Bigl(\frac{\sqrt{p}}{|I|}\Bigr)^{1/3}
$$
for any interval $I$ in $\Fp$ with $|I|>\sqrt{p}$.
\end{proposition}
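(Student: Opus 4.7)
The plan is to verify that the trace function $t_{\sheaf{F}}$ satisfies Condition $\mathcal{H}(5c^3)$ and then invoke Corollary~\ref{cor-concrete} with $c' = 5c^3$. The numerical factors in the proposition come out correctly from this choice: $2(5c^3)^{4/3} = 2 \cdot 5^{4/3} c^4 < 18 c^4$ and $6(5c^3)^{4/3} < 54 c^4$. The sup-norm bound $\|t_{\sheaf{F}}\|_\infty \leq c \leq 5c^3$ is immediate from the pointwise estimate on trace functions recalled just before Theorem~\ref{th-rh}, so the substantive work lies in producing a small exceptional set $D \subset \Fp$ outside of which $|\corr{t_{\sheaf{F}}}{a}| \leq 5c^3\sqrt{p}$.

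The key observation is that for any fixed $a \in \Fp$, the function $x \mapsto \overline{t_{\sheaf{F}}(x+a)}$ is the conjugate trace function of the pullback $[+a]^*\sheaf{F}$, which is again an isotypic Fourier sheaf of conductor $c$ (translation preserves rank, the singular set, and all Swan conductors). Theorem~\ref{th-rh} applied to the pair $(\sheaf{F}, [+a]^*\sheaf{F})$ then gives
\[
|\corr{t_{\sheaf{F}}}{a}| \leq 5c^3\sqrt{p}
\]
as soon as the geometric irreducible components of these two sheaves are non-isomorphic. Accordingly, I would take $D$ to be the set of $a \in \Fp$ for which $\sheaf{G} \cong [+a]^*\sheaf{G}$ holds geometrically, where $\sheaf{G}$ denotes the common geometric irreducible component of $\sheaf{F}$.

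It then remains to show $|D| \leq 5c^3$, and in fact I expect the cleaner bound $|D| \leq c$. If the affine singular set $S = S(\sheaf{G}) \cap \mathbf{A}^1(\bar{\Ff}_p)$ is non-empty, then every $a \in D$ must satisfy $a + S = S$, since a geometric isomorphism preserves the loci of non-trivial local monodromy; fixing any $s_0 \in S$ gives $D \subset S - s_0$ and hence $|D| \leq |S| \leq n(\sheaf{F}) \leq c$. If $S$ is empty, then $\sheaf{G}$ is lisse on all of $\mathbf{A}^1_{\bar{\Ff}_p}$. In this case $D$ is a subgroup of $\Fp$ (the isomorphisms compose), so by primality of $p$ either $D = \{0\}$ or $D = \Fp$. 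The latter alternative would force $\sheaf{G}$ to be stable under all $\Fp$-translations; by a structural classification of geometrically irreducible lisse sheaves on $\mathbf{A}^1$ with such a periodicity, $\sheaf{G}$ would then have to be a Tate twist of an Artin-Schreier sheaf geometrically, contradicting the Fourier hypothesis on $\sheaf{F}$. In either case $|D| \leq c$, so $t_{\sheaf{F}}$ satisfies $\mathcal{H}(5c^3)$ and both displayed inequalities of the proposition follow directly from Corollary~\ref{cor-concrete}.

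The main obstacle I anticipate is precisely the $S = \emptyset$ case: controlling lisse-on-$\mathbf{A}^1$ sheaves whose isomorphism class is stable under all $\Fp$-translations, where discrete periodicity over $\Fp$ must be upgraded to enough algebraic structure to pin $\sheaf{G}$ down as Artin-Schreier. This is the one step that genuinely uses the Fourier hypothesis rather than a conductor or singularity count; everything else is a routine assembly of Theorem~\ref{th-rh}, the invariance of the conductor under translation, and Corollary~\ref{cor-concrete}.
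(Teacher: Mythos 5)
Your overall strategy is the same as the paper's (check $\mathcal{H}(5c^3)$ by applying Theorem~\ref{th-rh} to the pair $(\sheaf{F},[+a]^*\sheaf{F})$, then plug into Corollary~\ref{cor-concrete}; the constants $2\cdot 5^{4/3}\leq 18$, $6\cdot 5^{4/3}\leq 54$ are right), and your treatment of the case where $\sheaf{G}$ has an affine singularity is fine. The gap is in the lisse case. The classification you invoke is false: invariance of the geometric isomorphism class under translation by the \emph{finite} group $\Fp$ does not force $\sheaf{G}$ to be geometrically an Artin--Schreier sheaf $\sheaf{L}_{\psi(bX)}$. A concrete counterexample is $\sheaf{G}=\sheaf{L}_{\psi((X^p-X)^2)}$ (or, more generally, $[X\mapsto X^p-X]^*\sheaf{H}$ for suitable $\sheaf{H}$): it is geometrically irreducible, lisse on $\mathbf{A}^1$, literally invariant under $x\mapsto x+a$ for every $a\in\Fp$ since $a^p=a$, and it is a Fourier sheaf, because $(X^p-X)^2$ is not congruent to a linear polynomial modulo Artin--Schreier equivalence. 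So in your second case you would be deriving a contradiction that does not exist, and the argument breaks down. Note also that the Proposition makes no assumption such as $c<p$, so this bad case cannot simply be excluded; it must be absorbed into the estimate.

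What is actually true, and what the paper uses (via \cite[Lemma 5.4 (2)]{gowers-norms} applied with $d=0$), is that $\sheaf{G}\simeq[+a]^*\sheaf{G}$ for some $a\neq 0$ forces $\cond(\sheaf{F})\geq p$ (in the example above $\swan_\infty=p+1$, consistent with this). Once $c\geq p$, the exceptional case is harmless: the trivial bound gives $|\corr{t_{\sheaf{F}}}{a}|\leq c^2p\leq c^3\leq c^3\sqrt{p}$, so one may take $D=\{0\}$ in all cases, which is how the paper concludes. (Alternatively, in your setup you could take $D=\Fp$ in the bad case, since $|D|=p\leq c\leq 5c^3$ then satisfies condition (ii) vacuously.) To repair your proof, replace the classification claim by this conductor lower bound, which is the one step that genuinely needs the input from \cite{gowers-norms}.
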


This proposition is a more precise form of Theorem~\ref{th-slide},
and completes the proof of that result.

\begin{proof}%%
  Since $|t_{\sheaf{F}}(x)|\leq c$ for all $x\in\Fp$, the first
  condition in $\mathcal{H}(5c^3)$ certainly holds, and we need to
  consider the correlation sums. For $a\in \Fp$, the function
  $x\mapsto t_{\sheaf{F}}(x+a)$ is the trace function of the sheaf
  $[+a]^*\sheaf{F}$, which is also an isotypic Fourier sheaf, and
  which has the same conductor as $\sheaf{F}$. By Theorem~\ref{th-rh},
  we have
$$
\Bigl|\sum_{x\in\Fp}{t_{\sheaf{F}}(x)\overline{t_{\sheaf{F}}(x+a)}}
\Bigr|\leq 5c^3\sqrt{p}
$$
unless the geometrically irreducible component of $\sheaf{F}$ (say
$\sheaf{G}$) is geometrically isomorphic to that of $[+a]^*\sheaf{F}$,
which is easily seen to be $[+a]^*\sheaf{G}$. Now suppose this is the
case for some $a\not=0$. Then it follows from~\cite[Lemma 5.4
(2)]{gowers-norms} (applied to $\sheaf{G}$, with $d=0$) that $c\geq
p$. But in that case we have the trivial bound
$$
|\corr{t_{\sheaf{F}}}{a}|\leq c^2p\leq c^3\leq c^3\sqrt{p}.
$$
\par
This means that we always have
$$
|\corr{t_{\sheaf{F}}}{a}|\leq 5c^3\sqrt{p}
$$
for all $a\not=0$, and hence we can take $D=\{0\}$ in checking
$\mathcal{H}(5c^3)$. The final estimates are then just the
applications of Corollary~\ref{cor-concrete}, since $2\cdot
5^{4/3}\leq 18$ and $6\cdot 5^{4/3}\leq 54$.
\end{proof}

\begin{remark}[Polya-Vinogradov bound for trace
  functions]\label{rm-general} 
  As already mentioned in the introduction, trace functions also
  satisfy a very general analogue of the Polya-Vino\-gra\-dov
  bound~(\ref{eq-polya-vino-chars}). More precisely, recall
  (see~\cite[Lemma 8.1, Prop. 8.2]{FKM1}) that if $\sheaf{F}$ is an
  isotypic Fourier sheaf, there exists a Fourier transform sheaf
  $\sheaf{G}$, defined by Deligne, such that
$$
t_{\sheaf{G}}(t)=-\frac{1}{\sqrt{p}}\sum_{x\in
  \Fp}{t_{\sheaf{F}}(x)e\Bigl(\frac{tx}{p}\Bigr)}=
-\hat{t}_{\sheaf{F}}(t)
$$
for all $t\in\Fp$. This sheaf is still an isotypic Fourier sheaf and
has conductor $\cond(\sheaf{G})\leq 10\cond(\sheaf{F})^2$, and
therefore, for such a sheaf $\sheaf{F}$, we have
$$
\|\hat{t}_{\sheaf{F}}\|_{\infty}\leq 10\cond(\sheaf{F})^2,
$$
so that~(\ref{eq-fourier}) gives
$$
\sum_{n\in I}t_{\sheaf{F}}(n)\ll \cond(\sheaf{F})^2\sqrt{p}(\log p)
$$
for any interval $I$ in $\Fp$, where the implied constant is
absolute. (The first cases of such bounds for sheaves which are not of
rank $1$ are found in~\cite{michel}.)
\par
Similarly (see~(\ref{eq-pv-gap})), Shao's result~\cite{shao} gives a
bound
$$
\sum_{n\in B}\varphi(n)\ll \cond(\sheaf{F})^2 \sqrt{p}(\log
p )^k
$$
if $B\subset \Fp$ is a proper $k$-dimensional generalized arithmetic
progression. 
\end{remark}

% Then there exists $\theta\in\Cc$ with modulus $1$ such that
% $$
% t_{\sheaf{F}}(x+a)=\theta t_{\sheaf{F}}(x)
% $$
% for all $x\in \Fp$. Taking $x=0$ and iterating, it follows that
% $t_{\sheaf{F}}(x)=\theta^xt_{\sheaf{F}}(0)$ for integers $0\leq x<p$,
% in particular $|t_{\sheaf{F}}|$ is constant. If it were non-zero,
% since $\sheaf{F}$ is not geometrically trivial (it is not 

We can now prove our equidistribution corollary.

\begin{proof}[Proof of Corollary~\ref{cor-rational}]
  (1) We can certainly assume that $\beta(n)<n^{1/2}$ for all $n$.  By
  the Weyl criterion, we must show that, for any fixed integer
  $h\not=0$, and for the interval $I_p$, the sums
$$
\frac{1}{|I_p|}\sum_{n\in I}e\Bigl(\frac{hf(n)}{p}\Bigr)
$$
tend to $0$ as $p\ra +\infty$. For a given $p$, and a suitable
$\ell$-adic non-trivial additive character $\psi$ of $\Fp$, we
consider the rank $1$ sheaf
$$
\sheaf{F}=\sheaf{L}_{\psi(hf(X))}
$$
which has trace function
$$
t_{\sheaf{F}}(x)=e\Bigl(\frac{hf(x)}{p}\Bigr)
$$
for all $x\in\Fp$. This is a middle-extension sheaf modulo $p$,
geometrically irreducible, pointwise pure of weight $0$. For $p$ large
enough so that $hf(X)$ is not a polynomial of degree $\leq 1$, this
sheaf is a Fourier sheaf. Its conductor satisfies
$$
\cond(\sheaf{F})\leq 1+(1+\deg(f_2))+\sum_{x\text{ pole of } f_2}
\mathrm{ord}_{x}(f_2)+\deg(f_1)\ll 1
$$
for all $p$ large enough (the first $1$ is the rank, the singularities
are at most at poles of $f_2$ and at $\infty$, the Swan conductor at a
pole of $f_2$ is at most the order of the pole, and at infinity it is
at most the order of the pole of $f$ at infinity, which is at most the
degree of $f_1$). Hence, by Proposition~\ref{pr-no-correlation}, for
some $c\geq 1$ independent of $p$, the trace function $t_{\sheaf{F}}$
satisfies $\mathcal{H}(c)$ for all $p$ large enough. By
Corollary~\ref{cor-concrete}, we get
$$
\frac{1}{|I_p|}\sum_{n\in
  I}e\Bigl(\frac{hf(n)}{p}\Bigr)=\frac{1}{|I_p|} S(t_{\sheaf{F}};I)
\ll \Bigl(\frac{\sqrt{p}}{|I_p|}\Bigr)^{1/3}\ll \beta(p)^{-1/3}\ra 0
$$
by assumption. 
\par
(2) Using the Weyl criterion, and keeping some notation from (1), it
is enough to show that for any fixed $d\geq 1$, we have
$$
\lim_{p\rightarrow +\infty} \frac{1}{|I_p|}\sum_{x\in
  I}U_d(2\cos\theta_p(x))= \frac{1}{|I_p|}\sum_{x\in I}U_d
\Bigl(\frac{S(x,1;p)}{\sqrt{p}}\Bigr) =0,
$$
where $U_d\in\Zz[X]$ is the Chebychev polynomial such that
$$
U_d(2\cos\theta)=(\sin\theta)^{d+1}/(\sin\theta).
$$
\par
By the theory of Deligne and Katz of Kloosterman
sheaves~\cite{katz-gskm}, the function
$$
\varphi(x)=U_d(2\cos\theta_p(x))
$$
is the trace function of a geometrically irreducible sheaf (the
symmetric $d$-th power of the rank $2$ Kloosterman sheaf) of rank
$d+1\geq 2$ on the affine line over $\Fp$, and this sheaf has
conductor bounded by a constant depending only on $d$, and not on
$p$. It is therefore a Fourier sheaf with trace function
satisfying~$\mathcal{H}(c)$ for some $c$ depending only on $d$, and
hence the desired limit holds again by a direct application of
Proposition~\ref{pr-no-correlation}. (See also, e.g.,~\cite[\S
10.3]{FKM1} for such facts about Kloosterman sheaves.)
\end{proof}

A somewhat similar application is the following:

\begin{proposition}[Polynomial residues]\label{pr-pol-residues}
  Let $\beta$ be a function defined on integers such that $1\leq
  \beta(n)\ra +\infty$ as $n\ra +\infty$. Let $f\in \Zz[X]$ be a
  non-constant monic polynomial. For all primes $p$ large enough,
  depending on $f$ and $\beta$, and for any interval $I_p$ modulo $p$
  of size $|I_p|\geq p^{1/2}\beta(p)$, there exists $x\in I_p$ such
  that $x=f(y)$ for some $y\in \Fp$. In fact, denoting by $P$ the set
  $f(\Fp)$ of values of $f$, the number of such $x$ is $\sim \delta_f
  |I_p|$ as $p\ra +\infty$, where $\delta_f=|P|/p$.
\end{proposition}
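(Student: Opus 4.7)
The plan is to express the indicator function $\mathbf{1}_P$, up to a bounded additive error, as a constant plus a sum of trace functions of non-trivial isotypic Fourier sheaves with conductor bounded independently of $p$, and then apply Proposition~\ref{pr-no-correlation} term by term. The decomposition comes from the Galois theory of the polynomial cover $y\mapsto f(y)$.

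Let $d=\deg f$. For $p$ large in terms of $f$, the map $y\mapsto f(y)$ is a degree-$d$ finite étale cover over the open set $U\subset\mathbf{A}^1_{\Fp}$ obtained by removing the $O_f(1)$ critical values of $f$ and $\infty$; let $G\hookrightarrow S_d$ be its geometric monodromy group, acting by permutations on $\{1,\dots,d\}$. For $x\in U(\Fp)$, the number $N_f(x):=|\{y\in\Fp:f(y)=x\}|$ equals the number of fixed points of $\mathrm{Frob}_x$ on $\{1,\ldots,d\}$, so $\mathbf{1}_P(x)=\chi_P(\mathrm{Frob}_x)$, where $\chi_P$ is the class function on $G$ taking the value $1$ on elements with at least one fixed point and $0$ otherwise. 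Decomposing $\chi_P=\sum_\rho a_\rho\chi_\rho$ into irreducible characters and pulling back through the Galois closure of the cover, I attach to each irreducible $\rho$ a geometrically irreducible middle-extension $\ell$-adic sheaf $\sheaf{F}_\rho$ on $\mathbf{A}^1_{\Fp}$ whose trace function on $U(\Fp)$ is $\chi_\rho(\mathrm{Frob}_x)$. The trivial representation gives $t_{\sheaf{F}_1}\equiv 1$. For $\rho\neq 1$, the sheaf $\sheaf{F}_\rho$ has finite geometric monodromy of order dividing $|G|$; for $p>|G|$ this order is coprime to $p$, ruling out any Artin-Schreier geometric factor, so $\sheaf{F}_\rho$ is a Fourier sheaf. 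Its rank is $\dim\rho\leq|G|$, its singularities lie in $\mathbf{A}^1\setminus U$, and its ramification is tame for $p$ large, giving $\cond(\sheaf{F}_\rho)=O_f(1)$ uniformly in $p$.

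On $U(\Fp)$ we then have $\mathbf{1}_P(x)=a_1+\sum_{\rho\neq 1}a_\rho\,t_{\sheaf{F}_\rho}(x)$, and the $O_f(1)$ exceptional points contribute only $O_f(1)$ to any sum of values (each $\|t_{\sheaf{F}_\rho}\|_\infty$ being bounded by the rank). Summing over $I_p$ and applying Proposition~\ref{pr-no-correlation} to each $\sheaf{F}_\rho$ with $\rho\neq 1$, together with $(\sqrt{p}/|I_p|)^{1/3}\leq\beta(p)^{-1/3}\to 0$, gives
$$\sum_{x\in I_p}\mathbf{1}_P(x)=a_1|I_p|+O_f(|I_p|\beta(p)^{-1/3})+O_f(1)=a_1|I_p|+o(|I_p|).$$
The same decomposition applied to $|P|=\sum_{x\in\Fp}\mathbf{1}_P(x)$, combined with Deligne's Riemann Hypothesis to bound $\sum_{x\in\Fp}t_{\sheaf{F}_\rho}(x)=O_f(\sqrt{p})$ for each $\rho\neq 1$, yields $|P|=a_1p+O_f(\sqrt{p})$ and hence $\delta_f=a_1+O_f(p^{-1/2})$. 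Substituting produces $|I_p\cap P|=\delta_f|I_p|+o(|I_p|)$, as required; the existence statement follows since $\delta_f\geq 1/d>0$. The main obstacle is the sheaf-theoretic setup, namely constructing the sheaves $\sheaf{F}_\rho$ via the Galois closure of the cover and checking that each is an isotypic Fourier sheaf of conductor $O_f(1)$; once this is in place, the asymptotic follows mechanically from Proposition~\ref{pr-no-correlation}.
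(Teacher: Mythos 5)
Your overall strategy is exactly the paper's: write the indicator function of $P=f(\Fp)$ as a constant main term plus a bounded number of trace functions of geometrically nontrivial, bounded-conductor sheaves, then apply Proposition~\ref{pr-no-correlation} termwise and compute the main coefficient by averaging over all of $\Fp$. The paper simply imports this decomposition from \cite[Prop.~6.7]{FKM2}; you try to rebuild it by hand, and that is where there is a genuine gap. You decompose the class function $\chi_P$ over the \emph{geometric} monodromy group $G$, but $\mathrm{Frob}_x$ only lives in the \emph{arithmetic} monodromy group $G^{\mathrm{arith}}$, which for a fixed $f\in\Zz[X]$ can be strictly larger than $G$ for infinitely many $p$ (the Galois closure of the cover may involve a nontrivial constant field extension). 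If you repair this by decomposing over $G^{\mathrm{arith}}$, your key claim --- that every nontrivial irreducible $\rho$ yields a geometrically nontrivial, hence Fourier, sheaf $\sheaf{F}_\rho$ --- fails: $\rho$ may restrict trivially to the geometric monodromy group, in which case $\sheaf{F}_\rho$ is geometrically constant, Proposition~\ref{pr-no-correlation} does not apply to it, and $\sum_{x\in\Fp}t_{\sheaf{F}_\rho}(x)$ has size $p$ rather than $O_f(\sqrt{p})$, so your identity $|P|=a_1p+O_f(\sqrt{p})$ and your main term $a_1|I_p|$ both break down. Concretely, take $f=X^3$ and $p\equiv 2\pmod 3$: then $P=\Fp$ and $\delta_f=1$, the geometric monodromy on $\mathbf{G}_m$ is $\Zz/3\Zz$ while $G^{\mathrm{arith}}=S_3$, every $\mathrm{Frob}_x$ is a transposition, and $\chi_P=\tfrac{2}{3}\cdot 1-\tfrac{1}{3}\cdot\mathrm{sgn}+\tfrac{1}{3}\cdot\chi_{\mathrm{std}}$; the sign character gives a geometrically trivial rank-one sheaf with constant trace $-1$, so $a_1=\tfrac{2}{3}\neq\delta_f$ and the term you would treat as an error in fact contributes $\tfrac{1}{3}|I_p|$ to the main term.

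The repair is to group all $\rho$ whose restriction to the geometric monodromy group is trivial into the main term (their trace functions are constants of modulus $1$), and to identify the resulting constant coefficient with $\delta_f+O_f(p^{-1/2})$ by the same comparison with $|P|$; one should also note that for the remaining $\rho$, arithmetic irreducibility only gives geometric \emph{semisimplicity}, not isotypy, so Proposition~\ref{pr-no-correlation} does not apply verbatim --- you must either refine the decomposition along Frobenius orbits of geometric isotypic components or verify Condition $\mathcal{H}(c)$ directly, allowing a slightly larger exceptional set $D$ coming from possible isomorphisms between translates of distinct geometric components. This bookkeeping is precisely what \cite[Prop.~6.7]{FKM2} packages: a decomposition $\mathbf{1}_P=\sum_i c_i\varphi_i$ with $\varphi_1\equiv 1$, $c_1=\delta_f+O(p^{-1/2})$, and all other $\varphi_i$ trace functions of geometrically nontrivial \emph{tame isotypic} sheaves of conductor bounded in terms of $\deg f$. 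Once that statement is in hand, the rest of your argument (termwise application of Proposition~\ref{pr-no-correlation}, the bound $(\sqrt{p}/|I_p|)^{1/3}\leq\beta(p)^{-1/3}\to 0$, and $\delta_f\geq 1/\deg f$ for the existence claim) coincides with the paper's proof.
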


Here again, the interest of the result is when $\beta(n)$ is smaller
than $\log n$. However, it seems likely that this distribution
property should be true for much shorter intervals.

\begin{proof}
  Let $\varphi$ be the characteristic function of the set $P$ of
  values $f(y)$ for $y\in\Fp$. We must show that, for $p$ large
  enough, we have
$$
\sum_{x\in I_p}\varphi(x)\sim \delta_f |I_p|
$$
(which in particular implies that the left-hand side is $>0$ for $p$
large enough.)
\par
By~\cite[Prop. 6.7]{FKM2}, if $p$ is larger than $\deg(f)$, there
exists a decomposition
$$
\varphi(x)=\sum_i c_i \varphi_i(x)
$$
where the number of terms in the sum and the $c_i$ are bounded in
terms of $\deg(f)$ only, and where $\varphi_i$ is the trace function
of a \emph{tame} isotypic sheaf $\sheaf{F}_i$ with conductor bounded
in terms of $\deg(f)$ only. Moreover, $\sheaf{F}_1$ is the trivial
sheaf with trace function equal to $1$, all others are geometrically
non-trivial, and
$$
c_1=\delta_f+O(p^{-1/2}),
$$
where $\delta_f=|P|/p$ and the implied constant depends only on
$\deg(f)$. Note that $\delta_f\gg 1$ for primes $p>\deg(f)$. 
\par
From this, we obtain
$$
\sum_{x\in I_p}\varphi(x)=c_1|I_p| +\sum_{i\not=1} c_i
S(t_{\sheaf{F}_i};I_p)= \delta_f|I_p| +\sum_{i\not=1} c_i
S(t_{\sheaf{F}_i};I_p)+O(p^{-1/2}|I_p|).
$$
\par
Since the $\sheaf{F}_i$, for $i\not=1$, are tame and non-trivial, they
are isotypic Fourier sheaves, and hence by
Proposition~\ref{pr-no-correlation}, we get
$$
S(t_{\sheaf{F}_i};I_p)\ll
|I_p|\Bigl(\frac{\sqrt{p}}{|I_p|}\Bigr)^{1/3} \ll |I_p|
\beta(p)^{-1/3},
$$
for each $i\not=1$, where the implied constant depends only on
$\deg(f)$. Hence we obtain
$$
\sum_{x\in I_p}\varphi(x)\sim \delta_f|I_p|
$$
uniformly for $p>\deg(f)$, since $\beta(p)\ra+\infty$, which gives the
result.
\end{proof}

\begin{remark}
Combining the first part of Proposition~\ref{pr-no-correlation} with
Theorem~\ref{th-gap} (instead of Corollary~\ref{cor-concrete}), we
obtain an analogue of Theorem~\ref{th-slide}
where the interval $I$ is replaced by a $k$-dimensional generalized
arithmetic progressions $B\subset \Fp$, with $k$ fixed, namely
$$
\sum_{x\in B}\varphi(x)\ll
|B|\Bigl(\frac{p^{1/2}}{|B|}\Bigr)^{1/(k+2)},
$$
where the implied constant depends on $c$ and $k$.
\par
Then, we derive immediately the analogues of
Corollary~\ref{cor-rational} and Proposition~\ref{pr-pol-residues}
where the intervals are replaced by $k$-dimensional generalized
arithmetic progressions $B\subset \Fp$ such that
$|B|=p^{1/2}\beta(p)$, where again $k$ is fixed.
\end{remark}

\section{Trace functions: the multiplicative case}\label{sec-mult}

We consider now a different application of the result of
Section~\ref{sec-abstract}: for a prime $p$, we look at the values of
trace functions modulo $p$ on the multiplicative group $A=\Fpt\simeq
\Zz/(p-1)\Zz$.  Fixing a generator $g$ of $A$, we are now looking at
sums over geometric progressions $xg^n$ for $n$ in some interval $I$
in $\Zz/(p-1)\Zz$.  Such sums are considered in~\cite[Ch. 1, \S
7]{korobov}.
\par
We will use the notation and terminology of the previous section, but
to avoid confusion we write $\tau_{\sheaf{F}}$ for the restriction of
the trace function of a sheaf $\sheaf{F}$ to $\Fpt$. The
multiplicative analogue of Proposition~\ref{pr-no-correlation} is
then:

\begin{proposition}\label{pr-no-correlation-mult}
  Let $p$ be a prime number, and let $\sheaf{F}$ be an isotypic sheaf
  modulo $p$ with conductor $c$ with geometric irreducible component
  not isomorphic to a Kummer sheaf $\sheaf{L}_{\chi}$ for some
  multiplicative character $\chi$. Then the trace function
  $\tau_{\sheaf{F}}$ satisfies $\mathcal{H}(6c^3)$ for the group
  $\Fpt$. In particular, if $g$ is a generator of $\Fpt$, we have
$$
\Bigl|\sum_{n\in I}\tau_{\sheaf{F}}(g^n)\Bigr|\leq
66c^{4}|I|\Bigl(\frac{\sqrt{p-1}}{|I|}\Bigr)^{1/3},
$$
for any interval $I$ in $\Zz/(p-1)\Zz$ with $|I|>\sqrt{p-1}$.
\end{proposition}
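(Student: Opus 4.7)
The plan is to mimic the proof of Proposition~\ref{pr-no-correlation} in the multiplicative setting. The first condition of $\mathcal{H}(6c^3)$ is immediate: $\|\tau_{\sheaf{F}}\|_{\infty}\leq c\leq 6c^{3}$ by the standard pointwise bound on trace functions of middle-extension sheaves.

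For the correlation sums on the group $A=\Fpt$, I would observe that for $a\in\Fpt$,
$$
\corr{\tau_{\sheaf{F}}}{a}=\sum_{x\in\Fpt}\tau_{\sheaf{F}}(x)\overline{\tau_{\sheaf{F}}(ax)},
$$
and $x\mapsto\tau_{\sheaf{F}}(ax)$ is the restriction to $\Fpt$ of the trace function of the pullback sheaf $[\times a]^{*}\sheaf{F}$. Since multiplication by $a$ is an automorphism of $\Gg_m$ which fixes $0$ and $\infty$ in $\mathbf{P}^{1}$, the pullback is again an isotypic Fourier sheaf with the same conductor $c$ (rank, number of singularities, and Swan conductors are preserved), and its geometric irreducible component is $[\times a]^{*}\sheaf{G}$, where $\sheaf{G}$ is the geometric irreducible component of $\sheaf{F}$. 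Crucially, $[\times a]^{*}\sheaf{G}$ is not a Kummer sheaf either, so Theorem~\ref{th-rh} is applicable; its second bound then yields
$$
|\corr{\tau_{\sheaf{F}}}{a}|\leq 6c^{3}\sqrt{p}
$$
whenever $[\times a]^{*}\sheaf{G}\not\simeq\sheaf{G}$ geometrically.

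The heart of the argument is to check that one may take the exceptional set $D$ to be $\{1\}$. The set of $a\in\Fpt$ for which $[\times a]^{*}\sheaf{G}\simeq\sheaf{G}$ is a subgroup $H\subset\Fpt$. If $H=\Fpt$, then $\sheaf{G}$ is geometrically invariant under all multiplicative translations, which forces $\sheaf{G}$ to be a Kummer sheaf—excluded by hypothesis. If $H$ is a proper but non-trivial subgroup, one invokes a multiplicative analogue of \cite[Lemma 5.4(2)]{gowers-norms}: any such non-trivial multiplicative period of a non-Kummer geometrically irreducible sheaf of bounded conductor forces $c$ to be essentially as large as $p$, at which point the trivial bound $|\corr{\tau_{\sheaf{F}}}{a}|\leq c^{2}(p-1)\leq c^{3}\sqrt{p-1}$ already delivers condition $\mathcal{H}(6c^{3})$ for every $a$. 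In either case we may take $D=\{1\}$, so $|D|\leq 6c^{3}$. Applying Corollary~\ref{cor-concrete} to the function $n\mapsto\tau_{\sheaf{F}}(g^{n})$ viewed on $\Zz/(p-1)\Zz$ gives the stated estimate, since $6\cdot(6c^{3})^{4/3}\leq 66\,c^{4}$.

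The main obstacle is the structural fact about the multiplicative stabilizer: the analogue of \cite[Lemma 5.4(2)]{gowers-norms} needs to be verified by studying the local monodromy of $\sheaf{G}$ at both $0$ and $\infty$ (rather than only at $\infty$, as in the additive case), showing that a non-trivial multiplicative periodicity combined with bounded ramification is incompatible with geometric irreducibility unless $\sheaf{G}$ is a Kummer sheaf. Once that structural input is available, the rest of the proof is a routine transposition of the argument in Proposition~\ref{pr-no-correlation}.
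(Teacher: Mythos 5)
The gap is in your ``heart of the argument'': the claim that the multiplicative stabilizer of $\sheaf{G}$ is either trivial or forces $c$ to be of size $p$ is false, so the hoped-for ``multiplicative analogue of \cite[Lemma 5.4(2)]{gowers-norms}'' does not exist in the form you need. In the additive case the stabilizer is a subgroup of $(\Fp,+)$, hence trivial or everything, which is why that dichotomy works there. In the multiplicative case $\bar{\Ff}_p^{\times}$ has many small finite subgroups, and a sheaf of bounded conductor can perfectly well be invariant under a non-trivial one: for instance $\sheaf{G}=\sheaf{L}_{\psi(x^2)}$ is geometrically irreducible, is a Fourier sheaf, is not Kummer, has conductor $O(1)$, and satisfies $[\times(-1)]^*\sheaf{G}\simeq\sheaf{G}$; correspondingly $\corr{\tau_{\sheaf{F}}}{a}=p-1$ at the exponent $a=(p-1)/2$, which is far larger than $6c^3\sqrt{p-1}$. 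So you cannot take $D=\{1\}$ (i.e.\ $D=\{0\}$ in $\Zz/(p-1)\Zz$), and your case (2) fallback via the trivial bound never gets off the ground because its premise ($c$ comparable to $p$) need not hold.

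The fix, which is what the paper does, is to exploit the flexibility built into Condition $\mathcal{H}(6c^3)$: the exceptional set need not be a single point, only of size at most $6c^3$. Let $\tilde{D}=\{y\in\bar{\Ff}_p^{\times}\,:\,\sheaf{G}\simeq[\times y]^*\sheaf{G}\}$; by \cite[Prop.~6.4, Prop.~6.5(2)]{FKM2} this is a \emph{finite} algebraic subgroup of $\mathbf{G}_m$ precisely because $\sheaf{G}$ is not Kummer, and one takes $D=\tilde{D}\cap\Fpt$ (transported to $\Zz/(p-1)\Zz$ via $g$). One then bounds $|D|\leq c$ by a local argument: if $\sheaf{F}$ has a singularity in $\mathbf{G}_m$, its whole $D$-orbit consists of singularities, so $\cond(\sheaf{F})\geq|D|$; if $\sheaf{F}$ is lisse on $\mathbf{G}_m$ it cannot be tame (else it would be a sum of Kummer sheaves), and \cite[Lemma~6.6]{FKM2} gives $\swan_{\infty}(\sheaf{F})\geq|D|$, so again $c\geq|D|$. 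With this $D$, Theorem~\ref{th-rh} handles all $a\notin D$, and Corollary~\ref{cor-concrete} then yields the stated bound exactly as in your final step (your reduction of the correlation to $[\times y]^*\sheaf{F}$, the conductor invariance, and the constant check $6\cdot 6^{4/3}\leq 66$ are all fine). So the structural input you flagged as ``to be verified'' is not merely missing: as you formulated it, it is false, and the argument must be rerouted through the finite-stabilizer bound $|D|\leq c$.
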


\begin{proof}
  Fix as above a generator $g$ of $\Fpt$.  For $a\in \Zz/(p-1)\Zz$,
  the correlation sums are now given by
$$
\corr{\tau_{\sheaf{F}}}{a}=
\sum_{n\in\Zz/(p-1)\Zz}\tau_{\sheaf{F}}(g^n)
\overline{\tau_{\sheaf{F}}(g^{a+n})}
=\sum_{x\in\Fpt}\tau_{\sheaf{F}}(x)\overline{\tau_{\sheaf{F}}(xy)}
$$
where $y=g^a$. The function $x\mapsto \tau_{\sheaf{F}}(xy)$ is then
the restriction to $\Fpt$ of the trace function of the sheaf $[\times
y]^*\sheaf{F}$, which is again an isotypic sheaf, and which has the
same conductor as $\sheaf{F}$. By the second bound in
Theorem~\ref{th-rh}, we get
\begin{equation}\label{eq-good}
  |\corr{\tau_{\sheaf{F}}}{a}|\leq 6c^3\sqrt{p}
\end{equation}
unless the geometrically irreducible component $\sheaf{G}$ of
$\sheaf{F}$ is geometrically isomorphic to $[\times
y]^*\sheaf{G}$. Now let
$$
\tilde{D}=\{y\in \bar{\Ff}_p^{\times}\,\mid\, \sheaf{G}\simeq [\times
y]^*\sheaf{G} \}
$$
(where $\simeq$ means geometric isomorphism.) This is a subgroup of
$\bar{\Ff}_p^{\times}$, and in fact, by~\cite[Prop. 6.4]{FKM2}, it is
an algebraic subgroup of the multiplicative group. Furthermore,
by~\cite[Prop. 6.5, (2)]{FKM2}, it is a \emph{finite} subgroup under
our assumption that $\sheaf{G}$ is not a Kummer sheaf. Let then $D$ be
the intersection of $\tilde{D}$ with $\Fpt$, which is a subgroup of
$\Fpt$, such that $\sheaf{F}$ satisfies~(\ref{eq-good}) for all
$a\notin D$. Now we distinguish two cases (the argument is implicit in
the proof of~\cite[Th. 6.3]{FKM2}): (1) if $\sheaf{F}$ is not lisse on
$\mathbf{G}_m$, then all points of the $D$-orbit of a singularity
$x\in\mathbf{G}_m$ are singularities, and hence
$$
c=\cond(\sheaf{F})\geq |D|,
$$
in which case $\mathcal{H}(6c^3)$ is true; (2) if $\sheaf{F}$ is lisse
on $\mathbf{G}_m$, then it is not tamely ramified (since a tamely
ramified lisse sheaf on $\mathbf{G}_m$ is geometrically a direct sum
of Kummer sheaves) but then~\cite[Lemma 6.6]{FKM2} shows that
$\swan_{\infty}(\sheaf{F})\geq |D|$, hence $c\geq |D|$ once more.
\par
Finally, since $6\cdot 6^{4/3}\leq 66$, we see
that~(\ref{eq-better-1}) gives the bound we claim for sums over
geometric progressions.
\end{proof}

\section{Special improvements}\label{sec-special-case}

The general argument leading to Theorem~\ref{central} can be improved
very slightly in special cases, both with respect to the summation set
$B$, and with respect to the function $\varphi$. These tweaks affect
separately the upper and lower bounds for the sum
$$
\Sigma=\sum_{a\in\Fp}{|S(\varphi;a+B)|^2}.
$$
\par
We begin with the lower bound, which we can improve when $B=I$ is an
interval in $\Zz/m\Zz$.

\begin{lemma}\label{lm-interval}
  Let $m\geq 1$ be an integer and let $\varphi$ be a function on
  $A=\Zz/m\Zz$. For any $\eps>0$, and any interval $I$ in $A$, we have
$$
\sum_{a\in\Fp}{|S(\varphi;a+I)|^2}\geq
\Bigl(\frac{1}{3}-\eps\Bigr)\frac{1}{\|\varphi\|_{\infty}} |S(\varphi;I)|^3
$$
provided $|S(\varphi;I)|/\|\varphi\|_{\infty}$ is large enough in
terms of $\eps$.
\end{lemma}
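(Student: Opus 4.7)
The plan is to refine the lower-bound step in the proof of Theorem~\ref{central} by replacing the single cut-off shift $s$ with a summation over all admissible integer shifts. Write $M=|S(\varphi;I)|$ and $L=\|\varphi\|_{\infty}$. For every integer $a$ we have $|I\triangle(a+I)|\leq 2|a|$, and hence
$$
|S(\varphi;a+I)|\geq M-2|a|L
$$
whenever the right-hand side is non-negative.

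First I would restrict the sum $\sum_{a\in A}|S(\varphi;a+I)|^2$ to the range $|a|\leq T:=\lfloor M/(2L)\rfloor$. Since $M\leq |I|\,L$, we have $T\leq |I|/2\leq m/2$, so these integers are pairwise distinct modulo $m$ and the inequality above applies with a non-negative right-hand side. Using positivity, this yields
$$
\sum_{a\in A}|S(\varphi;a+I)|^2\geq \sum_{|a|\leq T}(M-2|a|L)^2.
$$

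Next I would compare this explicit quadratic sum with the continuous analogue
$$
\int_{-M/(2L)}^{M/(2L)}(M-2|x|L)^2\,dx=\frac{M^3}{3L}.
$$
A direct evaluation of $\sum_{a=0}^{T}(M-2aL)^2$ (which is a polynomial of degree three in $T$), together with the easy bound on the truncation-at-an-integer error (the missing piece of the integral over $[T,M/(2L)]$ has length $<1$ and integrand bounded by $4L^2$), shows that
$$
\sum_{|a|\leq T}(M-2|a|L)^2=\frac{M^3}{3L}+O(M^2),
$$
so that the relative error is $O(L/M)$, which tends to $0$ as $M/L\to+\infty$. Hence for any $\eps>0$, the bound asserted in the lemma holds as soon as $M/L$ exceeds a threshold depending only on $\eps$.

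There is no serious obstacle here: the whole argument is essentially routine, and the only real insight is the change from a single cut-off shift (as in Theorem~\ref{central}) to the full range of admissible translates, which gains a factor of $8/3$ in the leading constant. The one place where care is needed is the Riemann-sum/integral comparison of the third step, but this reduces to the exact evaluation of a cubic polynomial in $T$ and a trivial boundary estimate.
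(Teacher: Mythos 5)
Your argument is correct and is essentially the paper's own proof: both restrict the sum to the shifts $|a|\leq M/(2L)$, lower bound each term by $(M-2|a|L)^2$, and compare the resulting sum to the integral $\int_0^1(1-x)^2\,dx=\tfrac13$, the only cosmetic difference being that you evaluate the cubic polynomial explicitly with an $O(M^2)$ error while the paper phrases the same comparison as a Riemann sum.
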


The factor $1/3$ improves here the factor $1/8$ of the general
inequality~(\ref{eq-basic-eighth}).

\begin{proof}
  Let $S=|S(\varphi;I)|$ and $\nu=\|\varphi\|_{\infty}$. For an
  integer $a\in\Zz$, we have already noted that $|I\triangle
  (I+a)|\leq 2|a|$, and
$$
|S(\varphi;a+I)|\geq |S(\varphi;I)|-2|a|\nu=S-2|a|\nu
$$
so we obtain
$$
\Sigma\geq S^2+2\sum_{1\leq j\leq \sigma}{(S-2j\nu)^2}
$$
as long as
$\sigma<\min(\tfrac{m}{2},\tfrac{S}{2\nu})=\tfrac{S}{2\nu}$. The
right-hand side is equal to
$$
S^3\Bigl\{\frac{1}{S}+\nu^{-1}\times \frac{2\nu}{S} \sum_{1\leq
  j\leq\sigma}\Bigl(1-\frac{2j\nu}{S}\Bigr)^2\Bigr\}.
$$
\par
Provided $S/(2\nu)$ is large enough, the inner sum is a Riemann sum
for
$$
\int_0^1{(1-x)^2dx}=\frac{1}{3},
$$
and the result follows.
\end{proof}

% \section{Special cases}\label{sec-special-case}

We next consider special cases of functions $\varphi$ for which the
correlations $\corr{\varphi}{a}$ are known exactly, in which case the
upper-bound for $\Sigma$ can be improved.
\par
One example was already mentioned in the introduction, and is the
function on $\Fp$, for $p\geq 3$, defined by
\begin{equation}\label{quadratic}
  \varphi (x) = e \Bigl(\frac{hx^2}{p} \Bigr).
 % \text {with } p\geq 3
 %  \text{ and } p\nmid h.
\end{equation}
\par
More generally, for $p$ prime, let $f$, $g\in\Fp(X)$ be rational
functions, let $\chi$ be a multiplicative character of $\Fpt$ and
define
$$
\psi(x)=\chi(f(x))e\Bigl(\frac{g(x)}{p}\Bigr),
$$
(with the usual conventions when $x$ is a pole of $g$ or a zero or
pole of $f$). Now let $\varphi$ be the (opposite of the) \emph{Fourier
  transform} of $\psi$, i.e.
\begin{equation}\label{eq-special}
\varphi(x)=-\frac{1}{\sqrt{p}}\sum_{y\in\Fp}{\psi(y)
e\Bigl(\frac{xy}{p}\Bigr)}=-\frac{1}{\sqrt{p}}\sum_{y\in\Fp}{
\chi(f(y))e\Bigl(\frac{g(y)+xy}{p}\Bigr)}
\end{equation}
so that $\varphi$ defines a family of one-variable character sums.
\par
Note that, by a classical computation, if $p\geq 3$, $x\in \Fp$ and
$h\in\Fpt$, we have
$$
e\Bigl(\frac{hx^2}{p}\Bigr)= \frac{1}{w_p(h)}
\sum_{y\in\Fp}{e\Bigl(-\frac{\overline{4h}y^2}{p}
  \Bigr)e\Bigl(\frac{xy}{p}\Bigr)},
$$
where
$$
w_p(h)=\frac{1}{\sqrt{p}}\sum_{y\in\Fp}e\Bigl(\frac{hy^2}{p}\Bigr)
$$
is a normalized Gauss sum, and hence has modulus $1$. This shows
that~(\ref{quadratic}) is, up to a constant factor of modulus $1$
independent of $x$, a special case of this definition.
\par
The main point is the following lemma:

\begin{lemma}\label{lm-special}
  Let $p$ be a prime number and let
$$
\varphi(x)=-\frac{1}{\sqrt{p}}\sum_{y\in\Fp}{
\chi(f(y))e\Bigl(\frac{g(y)+xy}{p}\Bigr)}
$$ 
where $f\in\Fp[X]$ and $g\in\Fp[X]$ are polynomials. Assume that $f$
is a polynomial with no zero in $\Fp$ \emph{(}for instance $f=1$ or an
irreducible polynomial of degree $\geq 2$.\emph{)} 
\par
We then have $\corr{\varphi}{a}=0$ for all $a\in\Fpt$ and
$\corr{\varphi}{0}=p$. In particular, we have
$$
\sum_{a\in\Fp}{|S(\varphi;a+B)|^2}=p|B|
$$
for any subset $B\subset \Fp$.
\end{lemma}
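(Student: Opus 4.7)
The plan is to compute the correlations via Plancherel's formula~\eqref{eq-plancherel}, exploiting the fact that $\varphi$ is defined as (minus) the Fourier transform of a pointwise unimodular function under the hypothesis on~$f$.

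First, set $\psi(y)=\chi(f(y))e(g(y)/p)$, so that $\varphi(x)=-\hat\psi(x)$ in the normalization of the paper. The key observation is that, since $g\in\Fp[X]$ is a polynomial, $e(g(y)/p)$ has modulus one at every $y\in\Fp$; and since by hypothesis $f$ has no zero in $\Fp$, we have $\chi(f(y))\ne 0$ and in fact $|\chi(f(y))|=1$ for every $y\in\Fp$. Thus $|\psi(y)|=1$ for all $y\in\Fp$.

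Next, I would apply Fourier inversion on $\Fp$ with the symmetric normalization of the paper, which gives $\hat{\hat{\psi}}(t)=\psi(-t)$. Combined with $\varphi=-\hat\psi$, this yields $\hat\varphi(t)=-\psi(-t)$, and hence
$$
|\hat\varphi(t)|^2=|\psi(-t)|^2=1
$$
for every $t\in\Fp$. Plancherel's identity~\eqref{eq-plancherel} then gives
$$
\corr{\varphi}{a}=\sum_{t\in\Fp}|\hat\varphi(t)|^2\,\overline{e(at/p)}=\sum_{t\in\Fp}e(-at/p),
$$
which equals $p$ when $a=0$ and $0$ otherwise, by the orthogonality of additive characters.

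Finally, for the statement about $S(\varphi;a+B)$, I would expand the square exactly as in the proof of Theorem~\ref{central}:
$$
\sum_{a\in\Fp}|S(\varphi;a+B)|^2=\sum_{x,y\in B}\corr{\varphi}{y-x}.
$$
Substituting the values just computed, only the diagonal terms $x=y$ contribute, each contributing $\corr{\varphi}{0}=p$, giving $p|B|$ as claimed. There is no real obstacle: the only subtle point is keeping careful track of the sign/normalization conventions so that Fourier inversion is applied correctly, and verifying that the no-zero hypothesis on $f$ is exactly what is needed to make $\psi$ pointwise unimodular on all of $\Fp$ (not merely on $\Fpt$).
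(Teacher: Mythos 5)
Your proposal is correct and follows essentially the same route as the paper: apply the Plancherel identity~\eqref{eq-plancherel}, note that $\varphi=-\hat\psi$ with $|\psi(y)|=1$ for all $y\in\Fp$ (this is exactly where the no-zero hypothesis on $f$ enters), so $|\hat\varphi(t)|^2\equiv 1$ and the correlation reduces to an orthogonality sum of additive characters, after which the identity for $\sum_a|S(\varphi;a+B)|^2$ follows by the same expansion as in Theorem~\ref{central}. The only difference is that you spell out the Fourier inversion step $\hat\varphi(t)=-\psi(-t)$, which the paper leaves implicit.
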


Note that this lemma does apply to~(\ref{quadratic}) with $g=X^2$ (and
$f=1$.)

\begin{proof}
By the Plancherel formula (see~(\ref{eq-plancherel})), we have
$$
\corr{\varphi}{a}=\sum_{t\in \Fp}
{|\hat{\varphi}(t)|^2e\Bigl(\frac{at}{p}\Bigr)}= \sum_{t\in \Fp}
{|\psi(t)|^2e\Bigl(\frac{at}{p}\Bigr)},
$$
(where $\psi$ is the function~(\ref{eq-special})), and under the
assumptions of the lemma, we see that $|\psi(t)|^2=1$ for \emph{all}
$t\in \Fp$, hence the result.
\end{proof}

We can use either Lemma~\ref{lm-interval} or Lemma~\ref{lm-special} to
derive variants of Theorem~\ref{central}. We just state the
combination of the two:

\begin{theorem}\label{paticular1}
  Let $p$ be a prime number, and let $\varphi$ be a function defined
  on $\Fp$ by~\emph{(\ref{eq-special})} such that $f$ is a polynomial
  in $\Fp[T]$, and that $g$ is a polynomial with no zero in
  $\Fp$. Then, for every $\eps>0$ and every interval $I\subset \Fp$
  such that $|I|$ is large enough in terms of $\eps$, we have
\begin{equation}\label{in1}
  | S (\varphi; I) |
  \leq  (3^{1/3}+\eps)\|\varphi\|_{\infty}^{1/3} 
  |I|^{1/3}p^{1/3}.
% \leq 3^\frac{1}{3} \vert I 
%   \vert \Bigl(\frac{p}{\vert I \vert^2} \Bigr)^\frac{1}{3}.
\end{equation}
\par
In particular, for $p\geq 3$ and $h\in\Fpt$, we have
$$
\frac{1}{|I|} \Bigl|\sum_{x\in I}e\Bigl(\frac{hx^2}{p}\Bigr)\Bigr|
\leq (3^{1/3}+\eps)\Bigl(\frac{\sqrt{p}}{|I|}\Bigr)^{2/3}
$$
provided $|I|$ is large enough in terms of $\eps$.
% \par
% Furthermore, let $i_{0}$ modulo $p$ be such that the interval $I$ is
% given by
% $$
% I= \bigl[i_{0}, i_{0}+\vert I\vert\bigr[ \ \bmod p,
% $$
% and assume that, modulo $p$, we have
% \begin{equation}\label{disjoint}
%    \bigl]i_{0}- 2\cdot \vert I\vert , i_{0}
%    +2\cdot \vert I \vert \bigr[\  \cap\  \bigl]-i_{0}- 
%    2\cdot \vert I\vert ,- i_{0}+2\cdot \vert I \vert \bigr[ =\emptyset.
% \end{equation}
% \par
% Then we have
% \begin{equation}\label{in2}
%   \bigl\vert S (\varphi; I) \bigr\vert \leq (3/2)^\frac{1}{3} 
%   \vert I \vert \Bigl(\frac{p}{\vert I \vert^2} \Bigr)^\frac{1}{3}.
% \end{equation}
\end{theorem}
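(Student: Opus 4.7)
The plan is to rerun the sliding-sum argument used for Theorem~\ref{central}, but replacing the generic bounds on $\Sigma = \sum_{a\in\Fp}|S(\varphi;a+I)|^2$ by the two sharpened inputs now available: the lower bound of Lemma~\ref{lm-interval}, which replaces the constant $1/8$ by $1/3 - \eps'$, and the exact evaluation of the correlations supplied by Lemma~\ref{lm-special}.

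For the upper bound on $\Sigma$, I would expand the square and exchange summations exactly as in the proof of Theorem~\ref{central}, obtaining
\[
\Sigma = \sum_{x,y\in I} \corr{\varphi}{y-x}.
\]
Under the hypotheses placed on $f$ and $g$, Lemma~\ref{lm-special} applies and gives $\corr{\varphi}{a} = 0$ for all $a\neq 0$, with $\corr{\varphi}{0} = p$. Thus only the diagonal $x = y$ survives and we get the \emph{exact} identity $\Sigma = p|I|$. For the lower bound, Lemma~\ref{lm-interval} gives
\[
\Sigma \geq \Bigl(\tfrac{1}{3} - \eps'\Bigr)\, \|\varphi\|_\infty^{-1}\, |S(\varphi;I)|^3,
\]
valid whenever $|S(\varphi;I)|/\|\varphi\|_\infty$ is sufficiently large in terms of $\eps'$ (the complementary regime is harmless, since there $|S(\varphi;I)|$ is bounded by a constant times $\|\varphi\|_\infty$, which is dwarfed by $\|\varphi\|_\infty^{1/3}|I|^{1/3}p^{1/3}$ once $|I|$ is large). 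Combining the two inequalities and extracting cube roots gives
\[
|S(\varphi;I)| \leq \Bigl(\tfrac{1}{1/3 - \eps'}\Bigr)^{1/3} \|\varphi\|_\infty^{1/3} |I|^{1/3} p^{1/3},
\]
and the constant $(1/3-\eps')^{-1/3}$ can be made smaller than $3^{1/3}+\eps$ by choosing $\eps'$ small enough relative to $\eps$. This yields \eqref{in1}.

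For the second assertion, I would invoke the Gauss-sum identity recalled just before Lemma~\ref{lm-special}, which writes $e(hx^2/p) = w_p(h)\,\varphi(x)$ with $\varphi$ of the form \eqref{eq-special} for $\chi=1$, $f=1$ (which vacuously has no zero in $\Fp$), and $g(y) = -\overline{4h}\,y^2$. Since $|w_p(h)| = 1$ and $\|\varphi\|_\infty = 1$, applying the first part of the theorem to this $\varphi$ and dividing through by $|I|$ gives
\[
\frac{1}{|I|}\Bigl|\sum_{x\in I} e\bigl(\tfrac{hx^2}{p}\bigr)\Bigr| \leq (3^{1/3}+\eps)\,\frac{p^{1/3}}{|I|^{2/3}} = (3^{1/3}+\eps)\Bigl(\frac{\sqrt{p}}{|I|}\Bigr)^{2/3},
\]
as claimed. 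There is no real obstacle in this proof: once Lemmas~\ref{lm-interval} and~\ref{lm-special} are in hand, the only care needed is in tracking the constant so that it reduces to $3^{1/3}+\eps$, and in recording the marginal case where the lower-bound hypothesis of Lemma~\ref{lm-interval} fails.
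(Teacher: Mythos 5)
Your proposal is correct and follows essentially the same route as the paper: the paper's proof is precisely the combination of the exact evaluation $\Sigma=p|I|$ from Lemma~\ref{lm-special} with the improved lower bound of Lemma~\ref{lm-interval}, together with the observation that in the regime where Lemma~\ref{lm-interval} does not apply the trivial bound $|S(\varphi;I)|\leq A(\eps)\|\varphi\|_{\infty}$ is already stronger than \eqref{in1} for $|I|$ large. Your treatment of the quadratic special case via the Gauss-sum identity (with the zero-free hypothesis correctly placed on $f$, matching Lemma~\ref{lm-special}) and the bookkeeping of the constant $(1/3-\eps')^{-1/3}\leq 3^{1/3}+\eps$ are exactly what the paper intends.
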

 
\begin{proof}
We just combine the upper and lower bounds for the sum $\Sigma$ given
by Lemmas~\ref{lm-interval} and~\ref{lm-special}, observing that if
$|S(\varphi;I)|/\|\varphi\|_{\infty}$ is too small for
Lemma~\ref{lm-interval} to apply, the resulting bound
$$
|S(\varphi;I)|\leq A(\eps)\|\varphi\|_{\infty}
$$
is stronger than~(\ref{in1}) if $|I|$ is large enough.
\end{proof}

\begin{remark}
  The exponent $2/3$ appearing in \eqref{in1} improves the exponent
  $1/3$ appearing in \eqref{eq-better-1} for instance. This is due to
  the absence of non--diagonal terms.  This result implies that the
  classical bound \eqref{eq-fourier} is better than \eqref{in1} when
  $\vert I \vert \gg p^\frac{1}{2} \log^3 p$, always in the particular
  case where $\varphi$ is defined by \eqref{quadratic}
\par
The result of Theorem \ref{paticular1} gives a non--trivial bound of
$\vert S (\varphi; I)\vert $ as soon as $\vert I\vert \geq (
3^\frac{1}{2} +o(1)) p^\frac{1}{2}$, which is rather close to the
critical length $p^{1/2}$.
% and $\vert I\vert \geq (( 3/2)^\frac{1}{2} +o(1)) p^\frac{1}{2}$.
\end{remark}

\begin{remark}\label{rm-kloos}
  The conditions described in Lemma~\ref{lm-special} are not the only
  ones for which we can prove Theorem~\ref{paticular1}. For instance,
  suppose $f$ (resp. $g$) has at worse a pole at $0$ (resp. at worse
  a pole or zero at $0$), which is the case for instance when
$$
\psi(x)=e(\bar{x}/p)
$$
for $x\in\Fpt$ and $\psi(0)=0$, in which case
$\varphi(x)=-S(x,1;p)/\sqrt{p}$ (the normalized Kloosterman sum) for
all $x\in \Fp$. Then we find that
$$
\corr{\varphi}{a}=\begin{cases}p-1&\text{ if } a=0\\
-1&\text{ otherwise}
\end{cases}
$$
which means that the sum~(\ref{defSigma}) is now equal to
$$
\sum_{x,y\in B}\corr{\varphi}{y-x}=p|B|-|B|^2\leq p|B|,
$$
from which we see that the result of Theorem~\ref{paticular1}
holds. If $I$ is an interval in $\Fp$, we get for instance
$$
\Bigl|\frac{1}{|I|}\sum_{x\in I}\frac{S(x,1;p)}{\sqrt{p}}\Bigr| \leq
(3^{1/3}+\eps) \Bigl(\frac{\sqrt{p}}{|I|}\Bigr)^{2/3}
$$
when $|I|$ is large enough in terms of $\eps$.
%% (since $32^{1/3}\leq 3.2$).
\end{remark}

We will finish with a multiplicative special case one, as in \S
\ref{sec-mult}, inspired by~\cite[Ch. 1, \S 7]{korobov}. We let
\begin{equation}\label{korobov}
\varphi (n) =e\Bigl( \frac{hg^n}{p}\Bigr),
\end{equation}
where $h\in \Fpt$ and $g\in \Fpt$ is a primitive root modulo $p$. The
bound \eqref{eq-good} is now replaced by
\begin{equation}\label{-1}
  \corr{\varphi}{a} =
  \sum_{m=0}^{p-2} e \Bigl(  \frac{h(g^m -g^{m+a})}{p}\Bigr)= -1,
\end{equation}
when $(p-1) \nmid a$. We then get:

\begin{theorem}\label{korobovthm}  
  Let $p\geq 3$ be a prime, $h\in\Fpt$ and let $\varphi$ be the
  function defined on $\Zz/ (p-1) \Zz$ by \eqref{korobov}. Then, for
  every $\eps>0$ and every interval $I$ in $\Zz/(p-1)\Zz$, we have
\begin{equation}\label{in2}
  \bigl\vert S (\varphi; I) 
  \bigr\vert \leq (3^\frac{1}{3}+\eps) \vert I \vert 
  \Bigl(\frac{\sqrt{p}}{\vert I    \vert} 
  \Bigr)^\frac{2}{3}
\end{equation}
if $|I|$ is large enough in terms of $\eps$.
\end{theorem}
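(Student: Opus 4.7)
The plan is to mimic the proof of Theorem~\ref{paticular1}, which combines an upper bound on $\Sigma = \sum_{a \in A}|S(\varphi;a+I)|^2$ coming from exact knowledge of the correlations with the refined lower bound of Lemma~\ref{lm-interval}. The group is now $A = \Zz/(p-1)\Zz$, which is cyclic, so Lemma~\ref{lm-interval} applies verbatim.

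First, I would expand $\Sigma$ as in the proof of Theorem~\ref{central}, obtaining
\[
\Sigma = \sum_{x,y \in I} \corr{\varphi}{y-x}.
\]
Since $I$ is an interval in $\Zz/(p-1)\Zz$, the differences $y-x$ lie in $\Zz/(p-1)\Zz$, and by~\eqref{-1} together with the obvious $\corr{\varphi}{0} = p-1$, the diagonal $y = x$ contributes $(p-1)|I|$ while the off-diagonal terms each contribute $-1$. Thus
\[
\Sigma = (p-1)|I| - (|I|^2 - |I|) = p|I| - |I|^2 \leq p|I|.
\]
Here the negativity of the off-diagonal correlations is actually favourable, replacing the rough bound of Theorem~\ref{central} by a clean one without error terms.

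Next, I would apply Lemma~\ref{lm-interval} to the cyclic group $\Zz/(p-1)\Zz$, noting that $\|\varphi\|_\infty = 1$. Provided $|S(\varphi;I)|$ is large enough in terms of $\eps$, this yields
\[
\Sigma \geq \Bigl(\frac{1}{3} - \eps\Bigr)|S(\varphi;I)|^3.
\]
Combining the two bounds gives
\[
|S(\varphi;I)|^3 \leq (3 + O(\eps))\, p|I|,
\]
which rearranges to~\eqref{in2} after adjusting $\eps$. In the complementary regime where $|S(\varphi;I)|$ is too small for Lemma~\ref{lm-interval} to apply, we have $|S(\varphi;I)| \leq A(\eps)$ for some constant, and this is stronger than~\eqref{in2} once $|I|$ is large enough.

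There is no real obstacle here: the two ingredients (the explicit correlation computation~\eqref{-1} and the sharpened lower bound of Lemma~\ref{lm-interval}) are already in place, and the only thing to check is that the sign of the off-diagonal correlations does not spoil the upper bound on $\Sigma$ — which it does not, since $-1 < 0$.
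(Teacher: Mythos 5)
Your proposal is correct and follows essentially the same route as the paper: expanding $\Sigma$ via the exact correlations \eqref{-1} to get $\Sigma = p|I| - |I|^2 \leq p|I|$, and then combining with the refined lower bound of Lemma~\ref{lm-interval} (handling the small-sum regime separately) exactly as in the proof of Theorem~\ref{paticular1}. No gaps to report.
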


\begin{proof}
The sum  $\Sigma$ is now given by
$$
\Sigma = \sum_{a=0}^{p-2}\, \Bigl\vert\, \sum_{n\in I} e \Bigl(
\frac{ hg^{n+a}}{p} \Bigr) \Bigr\vert^2.
$$
\par
Expanding the square and appealing to \eqref{-1}, we  obtain 
$$
\Sigma = (p-1) \vert I \vert -\bigl( \vert I \vert^2 -\vert I \vert
\bigr)= p\vert I \vert -\vert I \vert^2\leq p|I|,
$$
which, as in Remark~\ref{rm-kloos}, allows us to finish the proof.
% \par
% This inequality has to be compared with \eqref{Sigma=}, and the end of
% the proof of Theorem \ref{korobovthm} is then similar to the proof of
% \eqref{in1}.
\end{proof}

\end{document}